\newtheorem{thm}{Theorem}
\newtheorem{lemma}{Lemma}
\newtheorem{prop}{Proposition}
\newtheorem{defn}{Definition}
\newtheorem{remark}{Remark}
\newtheorem{ex}{Example}
\newtheorem{nt}{Notation}
\begin{document}

\title[A step for $\mathcal{S}\left(L(p,1)\right)$ via braids]
  {An important step for the computation of the HOMFLYPT skein module of the lens spaces $L(p,1)$ via braids}

\author{Ioannis Diamantis}
\address{ International College Beijing,
China Agricultural University,
No.17 Qinghua East Road, Haidian District,
Beijing, {100083}, P. R. China.}
\email{ioannis.diamantis@hotmail.com}

\author{Sofia Lambropoulou}
\address{ Departament of Mathematics,
National Technical University of Athens,
Zografou campus,
{GR-15780} Athens, Greece.}
\email{sofia@math.ntua.gr}
\urladdr{http://www.math.ntua.gr/~sofia}

\keywords{HOMFLYPT skein module, solid torus, Iwahori--Hecke algebra of type B, mixed links, mixed braids, lens spaces. }

\subjclass[2010]{57M27, 57M25, 57Q45, 20F36, 20C08}

\setcounter{section}{-1}

\date{}

\begin{abstract}
We prove that, in order to derive the HOMFLYPT skein module of the lens spaces $L(p,1)$ from the HOMFLYPT skein module of the solid torus, $\mathcal{S}({\rm ST})$, it suffices to solve an infinite system of equations obtained by imposing on the Lambropoulou invariant $X$ for knots and links in the solid torus, braid band moves that are performed only on the first moving strand of elements in a set $\Lambda^{aug}$, augmenting the basis $\Lambda$ of $\mathcal{S}({\rm ST})$.
\end{abstract}

\maketitle

\section{Introduction and overview}\label{intro}

Skein modules generalize knot polynomials in $S^3$ to knot polynomials in arbitrary 3-manifolds \cite{Tu, P}. Skein modules are quotients of free modules over ambient isotopy classes of links in 3-manifolds by properly chosen local (skein) relations.

\begin{defn}\rm
Let $M$ be an oriented $3$-manifold, $R=\mathbb{Z}[u^{\pm1},z^{\pm1}]$, $\mathcal{L}$ the set of all oriented links in $M$ up to ambient isotopy in $M$ and let $S$ be the submodule of $R\mathcal{L}$ generated by the skein expressions $u^{-1}L_{+}-uL_{-}-zL_{0}$, where
$L_{+}$, $L_{-}$ and $L_{0}$ comprise a Conway triple represented schematically by the local illustrations in Figure~\ref{skein}.

\begin{figure}[!ht]
\begin{center}
\includegraphics[width=1.7in]{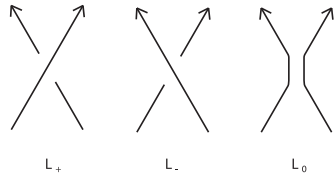}
\end{center}
\caption{The links $L_{+}, L_{-}, L_{0}$ locally.}
\label{skein}
\end{figure}

\noindent For convenience we allow the empty knot, $\emptyset$, and add the relation $u^{-1} \emptyset -u\emptyset =zT_{1}$, where $T_{1}$ denotes the trivial
knot. Then the {\it HOMFLYPT skein module} of $M$ is defined to be:

\begin{equation*}
\mathcal{S} \left(M\right)={\raise0.7ex\hbox{$
R\mathcal{L} $}\!\mathord{\left/ {\vphantom {R\mathcal{L} S }} \right. \kern-\nulldelimiterspace}\!\lower0.7ex\hbox{$ S  $}}=\mathcal{S} \left(M;{\mathbb Z}\left[u^{\pm 1} ,z^{\pm 1} \right],u^{-1} L_{+} -uL_{-} -zL{}_{0} \right).
\end{equation*}

\end{defn}

Note that the computation of $\mathcal{S}\left(M \right)$ is equivalent to constructing all possible independent analogues of the HOMFLYPT or 2-variable Jones polynomial for knots and links in $M$ and this is related to the rank of $\mathcal{S}\left(M \right)$.
\smallbreak

 In \cite{La2} the most generic analogue, $X$, of the HOMFLYPT polynomial for links in the solid torus $\rm ST$ has been derived from the generalized Hecke algebras of type $\rm B$, $\textrm{H}_{1,n}$, which is related to the knot theory of the solid torus and the Artin group of the Coxeter group of type B, $B_{1, n}$, via a unique Markov trace constructed on them. As explained in \cite{La2, DL2}, the Lambropoulou invariant $X$ recovers the HOMFLYPT skein module of ST, $\mathcal{S}({\rm ST})$, and is appropriate for extending the results to the lens spaces $L(p,1)$, since the combinatorial setting is the same as for $\rm ST$, only the braid equivalence includes the braid band moves (shorthanded to bbm), which reflect the surgery description of $L(p,1)$ via an unknotted surgery curve in $S^3$. Namely, in order to extend $X$ to an invariant of links in $L(p,1)$ we need to solve an infinite system of equations resulting from the braid band moves. Namely we force:

\begin{equation}\label{eqbbm}
X_{\widehat{\alpha}} =  X_{\widehat{bbm(\alpha)}},
\end{equation}

\noindent for all $\alpha \in \bigcup_{\infty}B_{1,n}$ and for all possible slidings of $\alpha$.

The above equations have particularly simple formulations with the use of a new basis, $\Lambda$, for the HOMFLYPT skein module of $\rm ST$, that we give in \cite{D, DL2} (see (\ref{basis}) in this paper) in terms of mixed braids (that is, classical braids with the first strand identically fixed and representing the complementary solid torus). Elements in $\Lambda$ consist of monomials in the $t_i$'s with consecutive indices and ordered exponents, while elements in the other basis, $\Lambda^{\prime}$, which first appeared in \cite{Tu, HK} in diagrammatic form, consist of monomials in the $t_i^{\prime}$'s with consecutive indices and ordered exponents (see Figure~\ref{genh}(ii)). In \cite{DL2} the set $\Lambda$ is related to the basis of $\mathcal{S}({\rm ST})$, $\Lambda^{\prime}$, via an infinite lower triangular matrix with invertible elements on the diagonal. Hence, this matrix allows the convertion of elements in $\Lambda^{\prime}$ to elements in $\Lambda$ and vice versa.

The bbm's are naturally described by elements in the basis $\Lambda$ and Equations~\ref{eqbbm} are very controlled in the algebraic setting, because, as shown in \cite{DLP}, they can be performed only on elements in $\Lambda$. The starting point in \cite{DLP} is the basis $\Lambda^{\prime}$ of $\mathcal{S}({\rm ST})$. Then, using conjugation and stabilization moves it is shown that the following diagram commutes:

\[
\begin{array}{ccccc}
\Lambda^{\prime} & \ni &  \tau_i^{\prime} & \overset{bbm_m}{\longrightarrow} & bbm_m(\tau_i^{\prime}) \\
                 &     &     {\uparrow}                &     &      {\uparrow}      \\
						     &     &     matrix                &     &      matrix      \\
                 &     &     {\downarrow}                &     &      {\downarrow}      \\
\Lambda          & \ni & \underset{i}{\sum} \tau_i &  \overset{bbm_m}{\longrightarrow} & \underset{i}{\sum} bbm_m(\tau_i) \\
\end{array}
\]

\noindent where $bbm_m(\tau_i^{\prime})$ denotes the result of the performance of a bbm on the $m^{th}$ moving strand of $\tau_i^{\prime}$. 

The fact that the $t_i^{\prime}$'s are conjugates on the braid level $B_{1, n}$, allows for a bbm performed on any moving strand to be translated, by conjugation, to bbm's performed on the first moving strand. This results in messing the order of the exponents in monomials of the $t_i^{\prime}$'s, so it leads naturally to introducing a new spanning set of $\mathcal{S}({\rm ST})$, ${\Lambda^{\prime}}^{aug}$, consisting of monomials in the $t_i^{\prime}$'s with consecutive indices but not necessarily ordered exponents, which obviously, augments $\Lambda^{\prime}$. Indeed, by conjugation, a bbm can be always assumed to take place on the first moving strand of elements in ${\Lambda^{\prime}}^{aug}$. Namely, the following diagram commutes:

\[
\begin{array}{ccccc}
\Lambda^{\prime} & \ni &  \tau_i^{\prime} & \overset{bbm_m}{\longrightarrow} & T_i^{\prime}  \\
                 &     &     \uparrow                &  &                              \uparrow  \\
								 &     &     conj.                    &  &                              |   \\
                 &     &     \downarrow                &  &                              \downarrow   \\
{\Lambda^{\prime}}^{aug} & \ni & \tau_j^{\prime} & \overset{bbm_1}{\longrightarrow} &  T_j^{\prime}  \\
\end{array}
\]

The set ${\Lambda^{\prime}}^{aug}$ corresponds naturally to the set $\Lambda^{aug}$ (see Eq.~\ref{lamaug}), which consists of monomials in the $t_i$'s with consecutive indices but not necessarily ordered exponents. 

\smallbreak

This paper is concerned with the analogue of the procedure above on the level of the $t_i$'s. In particular, we consider elements in the augmented set $\Lambda^{aug}$ and we restrict the performance of the braid band moves only on their {\it first moving strand}. Namely, we show that the following diagram commutes:

\[
\begin{array}{ccccc}
\Lambda & \ni &  \tau_i & \overset{bbm_m}{\longrightarrow} & bbm_m(\tau_i)  \\
                 &     &     \updownarrow                &  &                              \updownarrow  \\
{\Lambda}^{aug} & \ni & \underset{j}{\sum}\tau_j & \overset{bbm_1}{\longrightarrow} &  \underset{j}{\sum} bbm_1(\tau_j)  \\
\end{array}
\]

\noindent The fact that the $t_i$'s are not conjugate makes this procedure very non-trivial.

Our results are summarized in the commuting diagram below:

\[
\begin{array}{ccccccc}
{\bf \Lambda} & \ni &  \underset{i}{\sum} \tau_i &  & \overset{{\bf bbm_m}}{\longrightarrow} &  & \underset{i}{\sum} bbm_m(\tau_i) \\
        &     &     \underset{matrix}{\uparrow}                &     & &    \underset{[DLP]}{\nearrow}         &       {\uparrow}         \\
				        &     &     |                &     &  &             &        | \\
\Lambda^{\prime} & \ni &  \tau_i^{\prime} & \overset{bbm_m}{\longrightarrow} & bbm_m(\tau_i^{\prime}) & &| \\
                 &     &     \uparrow                &  &                              \uparrow  &      & | \\
								 &     &     conj.                    &  &                              |  &      & | \\
                 &     &     \downarrow                &  &                              \downarrow  &      & | \\
{\Lambda^{\prime}}^{aug} & \ni & \tau_j^{\prime} & \overset{bbm_1}{\longrightarrow} &  bbm_1(\tau_j^{\prime}) & &| \\
        &     &    |                &     &   &           &     |            \\
        &     &    \overset{matrix}{\downarrow}						&     &    & {\searrow}          &     \downarrow            \\
{\bf \Lambda^{aug}} & \ni & \underset{j}{\sum} \tau_j &  &  \overset{{\bf bbm_1}}{\longrightarrow} & & \underset{j}{\sum} bbm_1(\tau_j) \\
\end{array}
\]

In that way, a more controlled infinite system of equations is obtained using the generators $t_i$'s that are more natural for the bbm's.
Solving this infinite system is equivalent to computing the HOMFLYPT skein module of $L(p, 1)$. Namely:

\begin{equation}\label{result}
\mathcal{S}\left( L(p,1) \right) \ = \ \frac{\mathcal{S}({\rm ST})}{<\tau - bbm_1(\tau)>},\quad \tau\in \Lambda^{aug}.
\end{equation}

By our result, Equations~\ref{eqbbm} are now fewer. We demonstrate this by a simple example: For exponent sum equal to $3$ and all exponents positive, one has the following elements of $\Lambda$: $t^3, t^2t_1$ and $tt_1t_2$. From these elements one obtains $12$ equations by applying bbm's on all moving strands. On the other hand, $\Lambda^{aug}=\Lambda \bigcup \{tt_1^2\}$ and, by our result, one has to consider only $8$ equations. In that way we obtain more control on the infinite system (\ref{eqbbm}), even though $\Lambda \subset \Lambda^{aug}$. In \cite{DL4} we elaborate on the solution of this infinite system and compute the HOMFLYPT skein module of the lens spaces $L(p,1)$ via braids. We note  that in \cite{GM} $\mathcal{S}\left(L(p,1)\right)$ is computed using diagrammatic methods. The importance of our approach is that it can shed light on the problem of computing skein modules of arbitrary c.c.o. $3$-manifolds, since any $3$-manifold can be obtained by surgery on $S^3$ along unknotted closed curves. The main difficulty of the problem lies in selecting from the infinitum of band moves some basic ones and solving the infinite system of equations.

\bigbreak

The paper is organized as follows: In \S\ref{basics} we recall the setting and the essential techniques and results from \cite{La1, La2, LR1, LR2, DL1, DL2}. More precisely, we first present isotopy moves for knots and links in $L(p,1)$ and we describe the braid equivalence for knots and links in $L(p,1)$. We then present the new basis $\Lambda$ for $\mathcal{S}({\rm ST})$, with the use of which the braid band moves are naturally described, and we recall from \cite{DL2} the ordering defined on the sets $\Lambda$ and $\Lambda^{aug}$. We then recall from \cite{DLP, DL3} the derivation of $\mathcal{S}\left(L(p,1) \right)$ from $\mathcal{S}({\rm ST})$ using the braid approach. In \S~2 we prove the main result of this paper, Theorem~\ref{mainre}. That is, we show that in order to compute $\mathcal{S}\left(L(p,1) \right)$ from $\mathcal{S}({\rm ST})$ we only need to consider braid band moves on the first moving strands of elements in the set $\Lambda^{aug}$ and solve the infinite system of equations derived by imposing the Lambropoulou invariant $X$. We prove Theorem~\ref{mainre} by strong induction on the order of elements in $\Lambda^{aug}$ and on the moving strand where the bbm is performed. We first present a series of lemmata demonstrating how conjugation and stabilization moves can be used in order to convert elements in the set $\Lambda$ into sums of elements in $\Lambda^{aug}$ of lower order. The basis of the induction concerns monomials in $\Lambda$ of index 1 and it is proved in Proposition~\ref{indba}. Finally, in Proposition~\ref{prop}, using results from \cite{DL2, DLP}, we prove that equations obtained from elements in $\Lambda$ by performing bbm's on their $m^{th}$-moving strand are equivalent to equations obtained from elements in $\Lambda^{aug}$ of lower order by performing bbm's on their $j^{th}$-moving strand, where $j<m$. Using the above result we then conclude the proof of Theorem~\ref{mainre}.

\section{Topological and algebraic background}\label{basics}

\subsection{Mixed links and isotopy in $L(p,1)$}
 
We consider ST to be the complement of a solid torus in $S^3$. As explained in \cite{LR1, LR2, DL1}, an oriented link $L$ in ST can be represented by an oriented \textit{mixed link} in $S^{3}$, that is a link in $S^{3}$ consisting of the unknotted fixed part $\widehat{I}$ representing the complementary solid torus in $S^3$ and the moving part $L$ that links with $\widehat{I}$. A \textit{mixed link diagram} is a diagram $\widehat{I}\cup \widetilde{L}$ of $\widehat{I}\cup L$ on the plane of $\widehat{I}$, where this plane is equipped with the top-to-bottom direction of $I$ (see top left hand side of Figure~\ref{bmov}).

\smallbreak

The lens spaces $L(p,1)$ can be obtained from $S^3$ by surgery on the unknot with surgery coefficient $p\in \mathbb{Z}$. Surgery along the unknot can be realized by considering first the complementary solid torus and then attaching to it a solid torus according to some homeomorphism on the boundary. Thus, isotopy in $L(p,1)$ can be viewed as isotopy in ST together with the band moves in $S^3$, which reflect the surgery description of the manifold. Moreover, in \cite{DL1} it is shown that in order to describe isotopy for knots and links in a c.c.o. $3$-manifold, it suffices to consider only the type $a$ band moves (for an illustration see top of Figure~\ref{bmov}) and thus, isotopy between oriented links in $L(p,1)$ is reflected in $S^3$ by means of the following result (cf. Thm.~5.8 \cite{LR1}, Thm.~6 \cite{DL1} ):

\smallbreak

{\it
Two oriented links in $L(p,1)$ are isotopic if and only if two corresponding mixed link diagrams of theirs differ by isotopy in {\rm ST} together with a finite sequence of the type $a$ band moves.
}

\subsection{Mixed braids and braid equivalence for knots and links in $L(p,1)$}

By the Alexander theorem for knots and links in the solid torus (cf. Thm.~1 \cite{La2}), a mixed link diagram $\widehat{I}\cup \widetilde{L}$ of $\widehat{I}\cup L$ may be turned into a \textit{mixed braid} $I\cup \beta$ with isotopic closure. This is a braid in $S^{3}$ where, without loss of generality, its first strand represents $\widehat{I}$, the fixed part, and the other strands, $\beta$, represent the moving part $L$. The subbraid $\beta$ is called the \textit{moving part} of $I\cup \beta$ (see bottom left hand side of Figure~\ref{bmov}). Then, in order to translate isotopy for links in $L(p,1)$ into braid equivalence, we first perform the technique of {\it standard parting} introduced in \cite{LR2} in order to separate the moving strands from the fixed strand that represents the lens spaces $L(p,1)$. This can be realized by pulling each pair of corresponding moving strands to the right and {\it over\/} or {\it under\/} the fixed strand that lies on their right. Then, we define a {\it braid band move} to be a move between mixed braids, which is a band move between their closures. It starts with a little band oriented downward, which, before sliding along a surgery strand, gets one twist {\it positive\/} or {\it negative\/} (see bottom of Figure~\ref{bmov}).

\begin{figure}
\begin{center}
\includegraphics[width=3.4in]{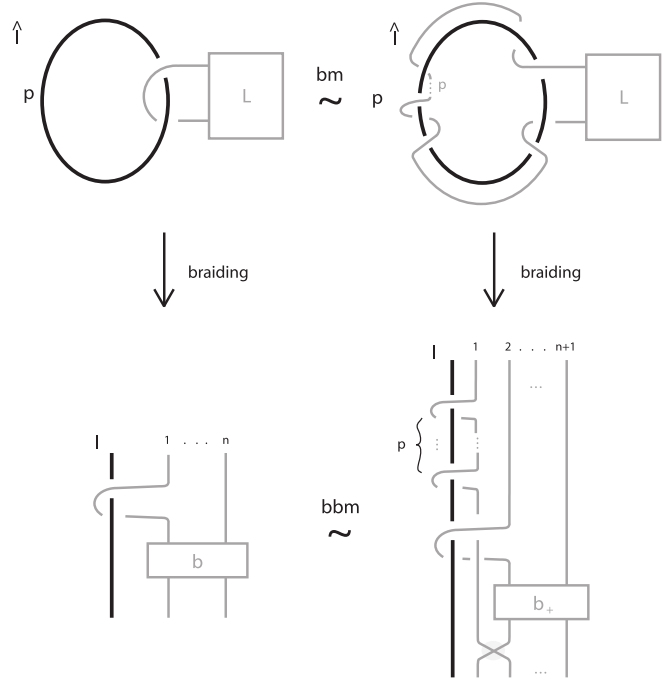}
\end{center}
\caption{Isotopy in $L(p,1)$ and the two types of braid band moves on mixed braids.}
\label{bmov}
\end{figure}

\smallbreak

The sets of braids related to ST form groups, which are in fact the Artin braid groups of type B, denoted $B_{1,n}$, with presentation:

\[ B_{1,n} = \left< \begin{array}{ll}  \begin{array}{l} t, \sigma_{1}, \ldots ,\sigma_{n-1}  \\ \end{array} & \left| \begin{array}{l}
\sigma_{1}t\sigma_{1}t=t\sigma_{1}t\sigma_{1} \ \   \\
 t\sigma_{i}=\sigma_{i}t, \quad{i>1}  \\
{\sigma_i}\sigma_{i+1}{\sigma_i}=\sigma_{i+1}{\sigma_i}\sigma_{i+1}, \quad{ 1 \leq i \leq n-2}   \\
 {\sigma_i}{\sigma_j}={\sigma_j}{\sigma_i}, \quad{|i-j|>1}  \\
\end{array} \right.  \end{array} \right>, \]

\noindent where the generators $\sigma _{i}$ and $t$ are illustrated in Figure~\ref{genh}(i).


Let now $\mathcal{L}$ denote the set of oriented knots and links in ST. Then, isotopy in $L(p,1)$ is then translated on the level of mixed braids by means of the following theorem:

\begin{thm}[Theorem~5, \cite{LR2}] \label{markov}
 Let $L_{1} ,L_{2}$ be two oriented links in $L(p,1)$ and let $I\cup \beta_{1} ,{\rm \; }I\cup \beta_{2}$ be two corresponding mixed braids in $S^{3}$. Then $L_{1}$ is isotopic to $L_{2}$ in $L(p,1)$ if and only if $I\cup \beta_{1}$ is equivalent to $I\cup \beta_{2}$ in $\mathcal{B}$ by the following moves:
\[ \begin{array}{clll}
(i)  & Conjugation:         & \alpha \sim \beta^{-1} \alpha \beta, & {\rm if}\ \alpha ,\beta \in B_{1,n}. \\
(ii) & Stabilization\ moves: &  \alpha \sim \alpha \sigma_{n}^{\pm 1} \in B_{1,n+1}, & {\rm if}\ \alpha \in B_{1,n}. \\
(iii) & Loop\ conjugation: & \alpha \sim t^{\pm 1} \alpha t^{\mp 1}, & {\rm if}\ \alpha \in B_{1,n}. \\
(iv) & Braid\ band\ moves: & \alpha \sim {t}^p \alpha_+ \sigma_1^{\pm 1}, & a_+\in B_{1, n+1},
\end{array} \]

\noindent where $\alpha_+$ is the word $\alpha$ with all indices shifted by +1. Note that moves (i), (ii) and (iii) correspond to link isotopy in {\rm ST}.
\end{thm}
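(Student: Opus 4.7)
The plan is to prove the two directions separately, with the forward direction (``if'') being essentially a verification that each of the listed moves preserves isotopy class in $L(p,1)$, and the backward direction (``only if'') being the main content, obtained by combining a Markov-type theorem for mixed braids in ST with the braided translation of type $a$ band moves.

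For the ``if'' direction I would check each move in turn. Conjugation $\alpha \sim \beta^{-1}\alpha\beta$ in $B_{1,n}$ corresponds to a cyclic permutation of the braid word, so the closures are isotopic in $S^3$ and hence in $L(p,1)$. Stabilization $\alpha \sim \alpha\sigma_n^{\pm 1}$ is just the classical Markov stabilization on the moving strands and is well-known to preserve link isotopy. Loop conjugation $\alpha \sim t^{\pm 1}\alpha t^{\mp 1}$ corresponds to an isotopy in ST obtained by dragging the closure once around the fixed axis $\widehat{I}$ (this is a ``$t$-conjugation'' and can be pictured as pulling the entire closed mixed braid once around $\widehat I$). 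Finally the braid band move (iv) is, by construction, the braided realization of a type $a$ band move along the surgery curve with framing $p$: the factor $t^p$ records the $p$ windings that the band picks up from the surgery coefficient, while the added strand and the $\sigma_1^{\pm 1}$ account for the two parallel strands of the band, with the sign distinguishing the two possible twists of the band before sliding.

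For the ``only if'' direction I would proceed in three steps. First, by the isotopy result recalled in \S\ref{basics} (from \cite{DL1, LR1}), any isotopy between $L_1$ and $L_2$ in $L(p,1)$ can be realized in $S^3$ as a finite sequence of isotopies of mixed link diagrams in ST followed by finitely many type $a$ band moves. Second, I would invoke the mixed-braid Markov theorem for ST (the analogue of Theorem~\ref{markov} without the band moves, proved in \cite{LR1}, \cite{LR2}) to translate each ST-isotopy segment into a finite sequence of moves (i), (ii), (iii) on the mixed braid. Third, and this is where the real work lies, I would translate each type $a$ band move on the link diagram into a move of type (iv) on the braid. The tool is the technique of \emph{standard parting} from \cite{LR2}: given a mixed link just before a band move, one brings the link into mixed braid form and uses moves (i)--(iii) to push the little band into a standard position adjacent to the fixed strand, so that the band move takes the local form depicted in Figure~\ref{bmov}, which is exactly $\alpha \mapsto t^p \alpha_+ \sigma_1^{\pm 1}$.

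The main obstacle is the third step: showing that \emph{every} type $a$ band move, performed at an arbitrary location of an arbitrary mixed braid, can be brought to the standard local form of (iv) using only the ST-moves (i)--(iii). Concretely one must verify that any two braided realizations of the ``same'' band move differ by moves (i)--(iii), which amounts to a braided version of the independence of the band move from choices (location of the band, which arc is slid, how the parallel copy is threaded through the existing braid). One also has to verify that the framing coefficient $p$ faithfully becomes the exponent of $t$, which is the point where the surgery description of $L(p,1)$ with coefficient $p$ enters. Once this local normalization is established, stringing these three steps together yields the theorem.
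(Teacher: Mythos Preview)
The paper does not contain a proof of this statement: Theorem~\ref{markov} is quoted verbatim as Theorem~5 of \cite{LR2} and is used here as background. There is therefore no ``paper's own proof'' to compare your proposal against.

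That said, your outline is a faithful sketch of the strategy actually carried out in \cite{LR1, LR2}: one direction is a direct check that each move realizes an isotopy in $L(p,1)$, and the other direction combines the Markov theorem for mixed braids in ST (moves (i)--(iii)) with the braided normalization of the type~$a$ band move via standard parting. Your identification of the ``main obstacle'' --- bringing an arbitrary band move into the standard local form (iv) using only ST-moves --- is exactly the substantive content of the argument in \cite{LR2}. So your proposal is correct in spirit, but be aware that the technical details (especially the $L$-moves machinery underlying the ST Markov theorem and the careful tracking of the band through the parting process) are considerable and live entirely in the cited references, not in the present paper.
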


\begin{nt}\rm
We denote a braid band move by bbm and, specifically, the result of a positive or negative braid band move performed on the $i^{th}$-moving strand of a mixed braid $\beta$ by $bbm_{\pm i}(\beta)$.
\end{nt}

Note also that in \cite{LR2} it was shown that the choice of the position of connecting the two components after the performance of a bbm is arbitrary.

\subsection{The HOMFLYPT skein module of ST via braids}\label{SolidTorus}

In \cite{La2} the most generic analogue of the HOMFLYPT polynomial, $X$, for links in the solid torus $\rm ST$ has been derived from the generalized Iwahori--Hecke algebras of type $\rm B$, $\textrm{H}_{1,n}$, via a unique Markov trace constructed on them. This algebra was defined by the second author as the quotient of ${\mathbb C}\left[q^{\pm 1} \right]B_{1,n}$ over the quadratic relations ${g_{i}^2=(q-1)g_{i}+q}$. Namely:

\begin{equation*}
\textrm{H}_{1,n}(q)= \frac{{\mathbb C}\left[q^{\pm 1} \right]B_{1,n}}{ \langle \sigma_i^2 -\left(q-1\right)\sigma_i-q \rangle}.
\end{equation*}

It is also shown that the following sets form linear bases for ${\rm H}_{1,n}(q)$ (\cite[Proposition~1 \& Theorem~1]{La2}):

\[
\begin{array}{llll}
 (i) & \Sigma_{n} & = & \{t_{i_{1} } ^{k_{1} } \ldots t_{i_{r}}^{k_{r} } \cdot \sigma \} ,\ {\rm where}\ 0\le i_{1} <\ldots <i_{r} \le n-1,\\
 (ii) & \Sigma^{\prime} _{n} & = & \{ {t^{\prime}_{i_1}}^{k_{1}} \ldots {t^{\prime}_{i_r}}^{k_{r}} \cdot \sigma \} ,\ {\rm where}\ 0\le i_{1} < \ldots <i_{r} \le n-1, \\
\end{array}
\]
\noindent where $k_{1}, \ldots ,k_{r} \in {\mathbb Z}$, $t_0^{\prime}\ =\ t_0\ :=\ t, \quad t_i^{\prime}\ =\ g_i\ldots g_1tg_1^{-1}\ldots g_i^{-1} \quad {\rm and}\quad t_i\ =\ g_i\ldots g_1tg_1\ldots g_i$ are the `looping elements' in ${\rm H}_{1, n}(q)$ (see Figure~\ref{genh}(ii)) and $\sigma$ a basic element in the Iwahori--Hecke algebra of type A, ${\rm H}_{n}(q)$, for example in the form of the elements in the set \cite{Jo}:

$$ S_n =\left\{(g_{i_{1} }g_{i_{1}-1}\ldots g_{i_{1}-k_{1}})(g_{i_{2} }g_{i_{2}-1 }\ldots g_{i_{2}-k_{2}})\ldots (g_{i_{p} }g_{i_{p}-1 }\ldots g_{i_{p}-k_{p}})\right\}, $$

\noindent for $1\le i_{1}<\ldots <i_{p} \le n-1{\rm \; }$. In \cite{La2} the bases $\Sigma^{\prime}_{n}$ are used for constructing a Markov trace on $\mathcal{H}:=\bigcup_{n=1}^{\infty}{\rm H}_{1, n}$, and using this trace the second author constructed a universal HOMFLYPT-type invariant for oriented links in ST.


\begin{figure}\label{loopttpr}
\begin{center}
\includegraphics[width=5.1in]{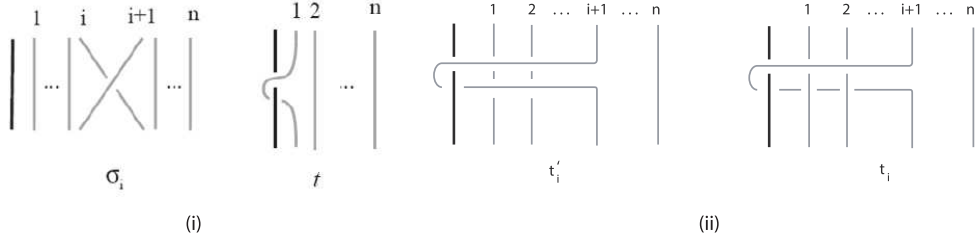}
\end{center}
\caption{The generators of $B_{1, n}$ and the `looping' elements $t^{\prime}_{i}$ and $t_{i}$.}
\label{genh}
\end{figure}

\begin{thm}{\cite[Theorem~6 \& Definition~1]{La2}} \label{tr}
Given $z, s_{k}$ with $k\in {\mathbb Z}$ specified elements in $R={\mathbb C}\left[q^{\pm 1} \right]$, there exists a unique linear Markov trace function on $\mathcal{H}$:

\begin{equation*}
{\rm tr}:\mathcal{H}  \to R\left(z,s_{k} \right),\ k\in {\mathbb Z}
\end{equation*}

\noindent determined by the rules:

\[
\begin{array}{lllll}
(1) & {\rm tr}(ab) & = & {\rm tr}(ba) & \quad {\rm for}\ a,b \in {\rm H}_{1,n}(q) \\
(2) & {\rm tr}(1) & = & 1 & \quad {\rm for\ all}\ {\rm H}_{1,n}(q) \\
(3) & {\rm tr}(ag_{n}) & = & z{\rm tr}(a) & \quad {\rm for}\ a \in {\rm H}_{1,n}(q) \\
(4) & {\rm tr}(a{t^{\prime}_{n}}^{k}) & = & s_{k}{\rm tr}(a) & \quad {\rm for}\ a \in {\rm H}_{1,n}(q),\ k \in {\mathbb Z} \\
\end{array}
\]

\bigbreak

\noindent Then, the function $X:\mathcal{L}$ $\rightarrow R(z,s_{k})$

\begin{equation*}
X_{\widehat{\alpha}} = \Delta^{n-1}\cdot \left(\sqrt{\lambda } \right)^{e}
{\rm tr}\left(\pi \left(\alpha \right)\right),
\end{equation*}

\noindent is an invariant of oriented links in {\rm ST}, where $\Delta:=-\frac{1-\lambda q}{\sqrt{\lambda } \left(1-q\right)}$, $\lambda := \frac{z+1-q}{qz}$, $\alpha \in B_{1,n}$ is a word in the $\sigma _{i}$'s and $t^{\prime}_{i} $'s, $\widehat{\alpha}$ is the closure of $\alpha$, $e$ is the exponent sum of the $\sigma _{i}$'s in $\alpha $, $\pi$ the canonical map of $B_{1,n}$ on ${\rm H}_{1,n}(q)$, such that $t\mapsto t$ and $\sigma _{i} \mapsto g_{i}$.
\end{thm}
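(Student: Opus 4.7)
The plan is to prove existence and uniqueness of the Markov trace separately by induction on $n$, and then derive invariance of the function $X$ from Theorem~\ref{markov}; since $X$ is an invariant of links in ${\rm ST}$, only moves (i)--(iii) of that theorem need to be checked.

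For uniqueness, observe that every element of ${\rm H}_{1,n}(q)$ is a linear combination of basis elements of $\Sigma_n'$, each of the form $u\cdot \sigma$ with $u = {t'_{i_1}}^{k_1}\cdots {t'_{i_r}}^{k_r}$ ($0 \le i_1 < \cdots < i_r \le n-1$) and $\sigma \in {\rm H}_n(q)$ written in the Jones basis $S_n$. If some generator $g_{n-1}$ appears as the rightmost factor of $\sigma$, rule (1) pulls it to the rightmost position of the whole word and rule (3) strips it off, reducing the index by one. If no $g_{n-1}$ appears, then either the element already lies in ${\rm H}_{1,n-1}(q)$, or its highest-index looping factor is ${t'_{n-1}}^k$ and rule (4) strips it off. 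Iterating down to $n=1$ and using rule (2) together with the prescribed values $s_k = \text{tr}(t^k)$ determines $\text{tr}$ on each basis element, hence on all of $\mathcal{H}$, forcing uniqueness.

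For existence, I would \emph{define} $\text{tr}$ by the above reduction and extend linearly. The main obstacle is well-definedness: the output must be independent of the order of reductions and compatible with all defining relations of $B_{1,n}$ modulo the Hecke quadratic relation $g_i^2 = (q-1)g_i + q$. This reduces to verifying a finite list of compatibility identities, the most delicate being the mixed braid relation $g_1 t g_1 t = t g_1 t g_1$, the far commutation $t g_i = g_i t$ for $i>1$, and the compatibility of rule (3) with the quadratic relation at the boundary index (ensuring the same answer whether one first collapses a $g_{n-1}$ via rule (3) or first applies a far-commutation or a quadratic substitution). This is the B-type analogue of Ocneanu's trace construction, and is the heart of the existence proof.

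For the invariant $X$, invariance under conjugation and loop conjugation (moves (i) and (iii) of Theorem~\ref{markov}) is immediate from rule (1), since both reduce to cyclic invariance at the algebra level. For positive stabilization $\alpha \mapsto \alpha \sigma_n$, rule (3) yields $\text{tr}(\pi(\alpha)g_n) = z\,\text{tr}(\pi(\alpha))$; comparing with the formula for $X_{\widehat{\alpha\sigma_n}}$ shows that invariance reduces to the single identity $\Delta\cdot\sqrt{\lambda}\cdot z = 1$, which one verifies directly from the definitions using $\lambda q = (z+1-q)/z$. For negative stabilization, linearizing $g_n^{-1} = q^{-1}g_n - (1-q^{-1})$ via the Hecke quadratic relation and applying rules (2) and (3) gives $\text{tr}(\pi(\alpha) g_n^{-1}) = \lambda z\,\text{tr}(\pi(\alpha))$; combined with the $(\sqrt{\lambda})^{-1}$ writhe adjustment, this reduces once more to the same identity, completing the verification that $X$ is an invariant of oriented links in ${\rm ST}$.
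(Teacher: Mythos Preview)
The paper does not prove this theorem: it is quoted verbatim from \cite{La2} (Theorem~6 and Definition~1 there) and is used as background, so there is no ``paper's own proof'' to compare against. Your sketch is the standard argument and is essentially correct in outline; it is exactly the strategy carried out in \cite{La2}, namely an inductive reduction on $\Sigma_n'$ to establish uniqueness, an Ocneanu-style compatibility check for existence, and the routine verification that the normalization $\Delta\sqrt{\lambda}\,z=1$ absorbs both stabilizations.

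One small point worth tightening if you ever write this out in full: in your uniqueness paragraph you say rule~(1) ``pulls $g_{n-1}$ to the rightmost position''. Rule~(1) is cyclic invariance, not free reordering, so strictly speaking you need that in the Jones normal form the last block already ends in the highest-index generator; this is true by the structure of $S_n$, but it should be stated rather than attributed to rule~(1). Similarly, in the existence part you correctly flag the compatibility checks as ``the heart of the proof'' but do not actually perform any of them; for a complete argument you would need to carry out at least the verification that rules~(3) and~(4) interact consistently with the four-term relation $g_1tg_1t=tg_1tg_1$, which is the one genuinely B-type computation.
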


\bigbreak

In the braid setting of \cite{La2}, the elements of $\mathcal{S}({\rm ST})$ correspond bijectively to the elements of the following set $\Lambda^{\prime}$:

\begin{equation}\label{Lpr}
\Lambda^{\prime}=\{ {t^{k_0}}{t^{\prime}_1}^{k_1} \ldots
{t^{\prime}_n}^{k_n}, \ k_i \in \mathbb{Z}\setminus\{0\}, \ k_i \leq k_{i+1}\ \forall i,\ n\in \mathbb{N} \}.
\end{equation}

\noindent As explained in \cite{La2, DL2}, the set $\Lambda^{\prime}$ forms a basis of $\mathcal{S}({\rm ST})$ in terms of braids (see also \cite{HK, Tu}). Note that $\Lambda^{\prime}$ is a subset of $\mathcal{H}$ and, in particular, $\Lambda^{\prime}$ is a subset of $\Sigma^{\prime}=\bigcup_n\Sigma^{\prime}_n$. Note also that in contrast to elements in $\Sigma^{\prime}$, the elements in $\Lambda^{\prime}$ have no gaps in the indices, the exponents are ordered and there are no `braiding tails'. 

\begin{remark}\rm
The Lambropoulou invariant $X$ recovers $\mathcal{S}({\rm ST})$. Indeed, it gives distinct values to distinct elements of $\Lambda^{\prime}$, since ${\rm tr}(t^{k_0}{t^{\prime}_1}^{k_1} \ldots {t^{\prime}_n}^{k_n})=s_{k_n}\ldots s_{k_1}s_{k_0}$.
\end{remark}

\subsection{A different basis for $\mathcal{S}({\rm ST})$}

In \cite{DL2}, a different basis $\Lambda$ for $\mathcal{S}({\rm ST})$ is presented, which is crucial toward the computation of $\mathcal{S}\left(L(p,1)\right)$ and which is described in Eq.~\ref{basis} in open braid form (for an illustration see Figure~\ref{basel}). In particular we have the following:

\begin{thm}{\cite[Theorem~2]{DL2}}\label{newbasis}
The following set is a $\mathbb{C}[q^{\pm1}, z^{\pm1}]$-basis for $\mathcal{S}({\rm ST})$:
\begin{equation}\label{basis}
\Lambda=\{t^{k_0}t_1^{k_1}\ldots t_n^{k_n},\ k_i \in \mathbb{Z}\setminus\{0\},\ k_i \leq k_{i+1}\ \forall i,\ n \in \mathbb{N} \}.
\end{equation}
\end{thm}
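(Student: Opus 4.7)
The strategy is to leverage the fact that $\Lambda^{\prime}$ is already known to be a basis of $\mathcal{S}(\mathrm{ST})$ and to exhibit a change-of-basis matrix between $\Lambda^{\prime}$ and $\Lambda$ that is infinite lower triangular with invertible diagonal entries, with respect to a suitable total ordering. Such a matrix is invertible over the coefficient ring, and applying it to the basis $\Lambda^{\prime}$ produces the basis $\Lambda$. This reduces the theorem to a single expansion computation together with a triangularity verification.

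First, I would equip the monomials $t^{k_0}t_1^{k_1}\cdots t_n^{k_n}$ (and analogously those of $\Lambda^{\prime}$) with a total ordering graded primarily by the length $n$ of the looping part, then by the total absolute exponent sum $\sum_i |k_i|$, and finally by a lexicographic rule on the exponent tuple $(k_0,\ldots,k_n)$ read in a fixed direction. Under the natural bijection $t^{k_0}{t_1^{\prime}}^{k_1}\cdots{t_n^{\prime}}^{k_n} \leftrightarrow t^{k_0}t_1^{k_1}\cdots t_n^{k_n}$, this ordering is preserved, so ``lower order'' makes sense uniformly on both sets.

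Second, the heart of the argument is a careful expansion of each ${t_i^{\prime}}^{k_i}$ in terms of $t_i^{k_i}$ plus strictly lower-order corrections. From the quadratic Hecke relation $g_i^2=(q-1)g_i+q$ one derives
\[
g_i-g_i^{-1}=(1-q^{-1})(g_i+1),
\]
so each inverse generator appearing in $t_i^{\prime}=g_i\cdots g_1\, t\, g_1^{-1}\cdots g_i^{-1}$ can be rewritten, one at a time, as $q^{-1}g_j$ plus a correction of lower ``Hecke-length''. The correction terms get absorbed by conjugation inside $\mathcal{H}$, which is a legitimate move at the level of $\mathcal{S}(\mathrm{ST})$ in view of the skein relation together with the Markov moves of Theorem~\ref{markov}. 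Iterating this first for a single factor and then, by induction on the total exponent sum, for the whole monomial, yields an identity of the form
\[
t^{k_0}{t_1^{\prime}}^{k_1}\cdots {t_n^{\prime}}^{k_n} \;=\; q^{\,c}\, t^{k_0}t_1^{k_1}\cdots t_n^{k_n} \;+\; \sum (\text{strictly lower-order elements of }\Lambda),
\]
where $c$ is an explicit integer; the diagonal coefficient $q^c$ is a unit in $R$.

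The main obstacle is the bookkeeping in extending the single-factor expansion to an entire monomial: one must check that every braiding tail produced along the way, once pushed to the left through conjugation and simplified via the quadratic relation, produces only summands which are genuinely lower in the chosen order, and in particular never violates the constraint $k_i\le k_{i+1}$ on the leading term. The cleanest way to manage this is strong induction on the pair (length, total absolute exponent sum), with trivial base case $t=t^{\prime}$, reducing everything to the single move ``replace one $g_j^{-1}$ by $q^{-1}g_j+(q^{-1}-1)$'' combined with conjugation. Once the triangularity is verified, the matrix converting $\Lambda^{\prime}$ to $\Lambda$ is invertible, and $\Lambda$ is therefore a $\mathbb{C}[q^{\pm 1},z^{\pm 1}]$-basis of $\mathcal{S}(\mathrm{ST})$.
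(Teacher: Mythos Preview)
Your proposal captures essentially the same strategy that the paper attributes to \cite{DL2}: relate $\Lambda$ to the known basis $\Lambda^{\prime}$ by an infinite lower-triangular change-of-basis matrix with units on the diagonal, whence $\Lambda$ is a basis. The present paper does not itself prove this theorem --- it is quoted from \cite{DL2} --- but the introduction explicitly describes the method as producing ``an infinite lower triangular matrix with invertible elements on the diagonal,'' which is precisely your plan.

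One point of divergence worth flagging: the ordering you propose (length first, then total absolute exponent, then lexicographic) is not the one actually used. The paper reproduces the ordering from \cite{DL2} as Definition~\ref{order}, and it is graded first by the \emph{signed} exponent sum $\sum_i k_i$, then by the index, then by a rather delicate sequence of tie-breakers on the individual exponents. That particular ordering is engineered so that the correction terms arising from replacing each $g_j^{-1}$ by $q^{-1}g_j+(q^{-1}-1)$, and then eliminating the resulting braiding tails via conjugation, stabilization, and the quadratic relation, land strictly below the leading term. With an ad~hoc ordering the triangularity claim is not automatic: the corrections typically live first in $\Sigma$ (monomials in the $t_i$'s with gaps in the indices, unordered exponents, and braiding tails in ${\rm H}_n(q)$), and several nontrivial reduction steps --- closing gaps, ordering exponents, eliminating tails --- are needed before they lie in $\Lambda$ and can be compared. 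Your sketch does acknowledge this bookkeeping difficulty, but the specific choice of ordering is where most of the technical work in \cite{DL2} actually resides; you should either adopt Definition~\ref{order} or verify carefully that your alternative ordering still yields triangularity after all the intermediate reductions.
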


The importance of the new basis $\Lambda$ of $\mathcal{S}({\rm ST})$ lies in the simplicity of the algebraic expression of a braid band move (recall Theorem~1(iv)), which extends the link isotopy in ST to link isotopy in $L(p,1)$ and this fact was our motivation for establishing this new basis $\Lambda$. Note that comparing the set $\Lambda$ with the set $\Sigma=\bigcup_n\Sigma_n$, we observe that in $\Lambda$ there are no gaps in the indices of the $t_i$'s and the exponents are in decreasing order. Also, there are no `braiding tails' in the words in $\Lambda$.

\begin{figure}
\begin{center}
\includegraphics[width=1.6in]{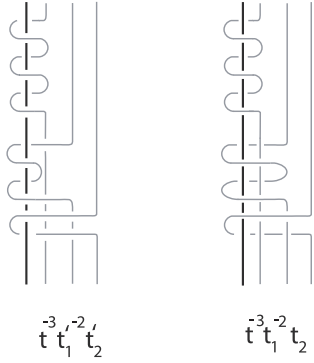}
\end{center}
\caption{Elements in the two different bases of $\mathcal{S}({\rm ST})$.}
\label{basel}
\end{figure}

\subsection{An ordering in the bases of $\mathcal{S}({\rm ST})$}

We now proceed with defining an ordering relation in the sets $\Sigma$ and $\Sigma^{\prime}$, which passes to their respective subsets $\Lambda$ and $\Lambda^{\prime}$ and that first appeared in \cite{DL2}. This ordering relation plays a crucial role to what will follow. For that we need the notion of the {\it index} of a word $w$ in any of these sets, denoted $ind(w)$. In $\Lambda^{\prime}$ or $\Lambda$ $ind(w)$ is defined to be the highest index of the $t_i^{\prime}$'s, resp. of the $t_i$'s in $w$. Similarly, in $\Sigma^{\prime}$ or $\Sigma$, $ind(w)$ is defined as above by ignoring possible gaps in the indices of the looping generators and by ignoring the braiding parts in the algebras $\textrm{H}_{n}(q)$. Moreover, the index of a monomial in $\textrm{H}_{n}(q)$ is equal to $0$.

\begin{defn}{\cite[Definition~2]{DL2}} \label{order}
\rm
Let $w={t^{\prime}_{i_1}}^{k_1}\ldots {t^{\prime}_{i_{\mu}}}^{k_{\mu}}\cdot \beta_1$ and $u={t^{\prime}_{j_1}}^{\lambda_1}\ldots {t^{\prime}_{j_{\nu}}}^{\lambda_{\nu}}\cdot \beta_2$ in $\Sigma^{\prime}$, where $k_t , \lambda_s \in \mathbb{Z}$ for all $t,s$ and $\beta_1, \beta_2 \in H_n(q)$. Then, we define the following ordering in $\Sigma^{\prime}$:

\smallbreak

\begin{itemize}
\item[(a)] If $\sum_{i=0}^{\mu}k_i < \sum_{i=0}^{\nu}\lambda_i$, then $w<u$.

\vspace{.1in}

\item[(b)] If $\sum_{i=0}^{\mu}k_i = \sum_{i=0}^{\nu}\lambda_i$, then:

\vspace{.1in}

\noindent  (i) if $ind(w)<ind(u)$, then $w<u$,

\vspace{.1in}

\noindent  (ii) if $ind(w)=ind(u)$, then:

\vspace{.1in}

\noindent \ \ \ \ ($\alpha$) if $i_1=j_1, \ldots , i_{s-1}=j_{s-1}, i_{s}<j_{s}$, then $w>u$,

\vspace{.1in}

\noindent \ \ \  ($\beta$) if $i_t=j_t$ for all $t$ and $k_{\mu}=\lambda_{\mu}, k_{\mu-1}=\lambda_{\mu-1}, \ldots, k_{i+1}=\lambda_{i+1}, |k_i|<|\lambda_i|$, then $w<u$,

\vspace{.1in}

\noindent \ \ \  ($\gamma$) if $i_t=j_t$ for all $t$ and $k_{\mu}=\lambda_{\mu}, k_{\mu-1}=\lambda_{\mu-1}, \ldots, k_{i+1}=\lambda_{i+1}, |k_i|=|\lambda_i|$ and $k_i>\lambda_i$, then $w<u$,

\vspace{.1in}

\noindent \ \ \ \ ($\delta$) if $i_t=j_t\ \forall t$ and $k_i=\lambda_i$, $\forall i$, then $w=u$.

\end{itemize}

The ordering in the set $\Sigma$ is defined as in $\Sigma^{\prime}$, where $t_i^{\prime}$'s are replaced by $t_i$'s.
\end{defn}


\bigbreak

 We also define the \textit{subsets of level $k$}, $\Lambda_{(k)}$ and $\Lambda^{\prime}_{(k)}$, of $\Lambda$ and $\Lambda^{\prime}$ respectively ({\cite[Definition~3]{DL2}}), to be the sets:

\begin{equation}
\begin{array}{l}
\Lambda_{(k)}:=\{t_0^{k_0}t_1^{k_1}\ldots t_{m}^{k_m} | \sum_{i=0}^{m}{k_i}=k,\ k_i \in \mathbb{Z}\setminus\{0\},\  k_i \leq k_{i+1}\ \forall i \}\\
\\
\Lambda^{\prime}_{(k)}:=\{{t^{\prime}_0}^{k_0}{t^{\prime}_1}^{k_1}\ldots {t^{\prime}_{m}}^{k_m} | \sum_{i=0}^{m}{k_i}=k,\ k_i \in \mathbb{Z}\setminus\{0\},\  k_i \leq k_{i+1}\ \forall i \}
\end{array}
\end{equation}

\noindent In \cite{DL2} it was shown that the sets $\Lambda_{(k)}$ and $\Lambda^{\prime}_{(k)}$ are totally ordered and well ordered for all $k$ (\cite[Propositions~1 \& 2]{DL2}). Note that in \cite{DLP} the exponents in the monomials of $\Lambda$ are in decreasing order, while here the exponents are considered in increasing order, which is totally symmetric. 

\smallbreak

We finally define the set $\Lambda^{aug}$ which augments the basis $\Lambda$ and its subset of level $k$:

\begin{defn}\rm
We define the set:
\begin{equation}\label{lamaug}
\Lambda^{aug}\ :=\{t_0^{k_0}t_1^{k_1}\ldots t_{n}^{k_n},\ k_i \in \mathbb{Z}\backslash \{0\}\}.
\end{equation}
\noindent and the {\it subset of level} $k$, $\Lambda^{aug}_{(k)}$, of $\Lambda^{aug}$:

\begin{equation}\label{lamaug2}
\Lambda^{aug}_{(k)}:=\{t_0^{k_0}t_1^{k_1}\ldots t_{m}^{k_m} | \sum_{i=0}^{m}{k_i}=k,\ k_i \in \mathbb{Z}\backslash \{0\}\}
\end{equation}
\end{defn}

\section{Restricting the bbm's on the first moving strand of $\Lambda^{aug}$}

As mentioned in the Introduction, in order to compute $\mathcal{S}(L(p,1))$ we need to normalize the invariant $X$ by forcing it to satisfy all possible braid band moves. Namely:

$$
\mathcal{S}\left( L(p,1) \right) \ = \ \frac{\mathcal{S}({\rm ST})}{<a - bbm_i(a)>},\quad {\rm for\ all}\ a\in B_{1, n}\ {\rm and\ for\ all}\ i.
$$

\subsection{Applying bbm's on any moving strand of $\Lambda$}

In order to simplify this system of equations, in \cite{DLP} we first show that performing a bbm on a mixed braid in $B_{1, n}$ reduces to performing bbm's on elements in the canonical basis, $\Sigma_n^{\prime}$, of the algebra ${\rm H}_{1,n}(q)$ and, in fact, on their first moving strand. We then reduce the equations obtained from elements in $\Sigma^{\prime}$ to equations obtained from elements in $\Sigma$. In order now to reduce further the computation to elements in the basis $\Lambda$ of $\mathcal{S}({\rm ST})$, in \cite{DLP} we first recall that elements in $\Sigma$ consist in two parts: a monomial in $t_i$'s with possible gaps in the indices and unordered exponents, followed by a `braiding tail' in the basis of ${\rm H}_n(q)$. So, in \cite{DLP}, we first manage the gaps in the indices of the looping generators of elements in $\Sigma$, obtaining elements in the augmented ${\rm H}_{n}(q)$-module $\Lambda^{aug}$ (that is, elements in $\Lambda^{aug}$ followed by `braiding tails'). We denote the ${\rm H}_{n}(q)$-module $\Lambda^{aug}$ by $\Lambda^{aug}|{\rm H}_n$. Note that the procedure of managing the gaps forces the performance of bbm's to take place on {\it any moving} strand. We then show in \cite{DLP} that the equations obtained from elements in $\Lambda^{aug}|{\rm H}_n$ by performing bbm's on any moving strand are equivalent to equations obtained from elements in the ${\rm H}_{n}(q)$-module $\Lambda$, denoted by $\Lambda|{\rm H}_n$, by performing bbm's on any moving strand. By this procedure we order the exponents of the $t_i$'s (`ordering the exponents'). We finally eliminate the `braiding tails' from elements in $\Lambda|{\rm H}_n$ and reduce the computations to the set $\Lambda$, where the bbm's are performed on any moving strand (see \cite{DLP}). Thus, in order to compute $\mathcal{S}(L(p,1))$, it suffices to solve the infinite system of equations obtained by performing bbm's on any moving strand of elements in the set $\Lambda$.

\smallbreak

The above are summarized in the following sequence of equations:

$$
\begin{array}{llllll}
\mathcal{S}\left( L(p,1) \right) & = & \frac{\mathcal{S}({\rm ST})}{<a - bbm_i(a)>},\ a\in B_{1, n}, \ \forall\ i & = & \frac{\mathcal{S}({\rm ST})}{<s^{\prime} - bbm_1(s^{\prime})>},\ s^{\prime}\in \Sigma_n^{\prime} & = \\
&&&&&\\
& = & \frac{\mathcal{S}({\rm ST})}{<s - bbm_1(s)>},\ s\in \Sigma_n & = & \frac{\mathcal{S}({\rm ST})}{<\lambda^{\prime} - bbm_i(\lambda^{\prime})>},\ \lambda^{\prime} \in \Lambda^{aug}|{\rm H}_n,\ \forall\ i & = \\
&&&&&\\
& = & \frac{\mathcal{S}({\rm ST})}{<\lambda^{\prime \prime} - bbm_i(\lambda^{\prime \prime})>},\ \lambda^{\prime \prime}\in \Lambda|{\rm H}_n, \ \forall\ i & = & \frac{\mathcal{S}({\rm ST})}{<\lambda - bbm_i(\lambda)>},\ \lambda\in \Lambda, \ \forall\ i.\\
\end{array}
$$

Namely, we have:

\begin{thm}\label{dlpl}{\cite{DLP}}
$\mathcal{S}\left( L(p,1) \right)\ =\ \frac{\mathcal{S}({\rm ST})}{<\lambda - bbm_i(\lambda)>},\ \lambda\in \Lambda, \ \forall\ i.$
\end{thm}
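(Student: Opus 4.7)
The starting point is the identity $\mathcal{S}(L(p,1)) = \mathcal{S}(\mathrm{ST})/\langle a-bbm_i(a)\rangle$, taken over all $a\in\bigcup_n B_{1,n}$ and all moving strands $i$. This is immediate from Theorem~\ref{markov}: of the four braid moves, conjugation, stabilization and loop conjugation all correspond to ST-isotopy and are therefore already encoded in $\mathcal{S}(\mathrm{ST})$, so only the braid band moves remain to be imposed. The plan is then to carry out a five-stage reduction of the indexing set for these defining bbm relations, following the chain of equalities displayed just above the theorem statement.

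First, I would project to the Hecke quotient. After invoking the skein relation, equivalently the quadratic Hecke relation under the standard change of variables between $u,z$ and $q$, any element of $B_{1,n}$ becomes a linear combination of basis elements of $\Sigma'_n$, so the bbm relations on all of $B_{1,n}$ are equivalent to the bbm relations on $\Sigma'_n$. Since the $t'_j$'s are mutually conjugate in $B_{1,n}$, inserting an appropriate conjugation (which is free in $\mathcal{S}(\mathrm{ST})$) converts a $bbm_m$ on any moving strand of $\tau'\in\Sigma'_n$ into a $bbm_1$ on a (possibly different) element of $\Sigma'_n$; this reduces everything to $bbm_1$'s on $\Sigma'_n$. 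Second, I would translate $\Sigma'_n$ to $\Sigma_n$ via the invertible triangular change-of-basis matrix between the $t'_i$'s and the $t_i$'s of \cite{DL2}: each $bbm_1(\tau')$ equation becomes, modulo equations of strictly lower order in the sense of Definition~\ref{order}, a $bbm_1(\tau)$ equation on $\Sigma_n$. Third, I would remove index gaps, rewriting $t_j$ via $g_j t_{j-1} g_j$ together with the quadratic Hecke relation to pass from $\Sigma_n$ to the augmented module $\Lambda^{aug}|{\rm H}_n$, at the price of allowing bbm's on any moving strand. Fourth, I would order the exponents, passing from $\Lambda^{aug}|{\rm H}_n$ to $\Lambda|{\rm H}_n$ by swapping adjacent $t_j^a t_{j+1}^b$ using the corresponding commutation-up-to-lower-order relations inside the skein module. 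Fifth, I would eliminate the braiding tails, using the Markov trace of Theorem~\ref{tr} and the defining relations of ${\rm H}_n(q)$ to absorb each tail into its looping monomial, descending from $\Lambda|{\rm H}_n$ to $\Lambda$ modulo lower-order corrections.

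The principal obstacle is the interaction between steps three and four: closing a gap can destroy the ordering of exponents, and reordering exponents can reintroduce monomials whose gaps then need to be closed again. The resolution is to run the whole procedure as a single strong induction on the well-ordering of Definition~\ref{order}, which by \cite[Propositions~1 \& 2]{DL2} is a well-ordering on each level $\Lambda_{(k)}$, so that every correction produced by any of the five reductions is strictly smaller than the term being reduced and the process terminates. A secondary subtlety is verifying that step five does not reintroduce gaps or unordered exponents; this is handled by weaving step five into the same induction rather than executing it as a separate final pass.
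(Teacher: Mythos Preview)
Your proposal is correct and follows essentially the same approach that the paper summarizes from \cite{DLP}: the same five-stage reduction $B_{1,n}\to\Sigma'_n\to\Sigma_n\to\Lambda^{aug}|{\rm H}_n\to\Lambda|{\rm H}_n\to\Lambda$, with the same mechanism at each stage (conjugacy of the $t'_i$'s for the first reduction, gap-closing forcing bbm's onto arbitrary strands, exponent-ordering, and tail-elimination), all organized by induction on the ordering of Definition~\ref{order}. Note that the paper itself does not prove this theorem but only cites and outlines the argument from \cite{DLP}; your sketch matches that outline in both structure and detail.
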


In this section we prove Eq.~\ref{result}, assuming Theorem~\ref{dlpl}. More precisely, we consider the augmented set $\Lambda^{aug}$ and show that the system of equations obtained from elements in $\Lambda$ by performing bbm's on {\it any moving} strand, is equivalent to the system of equations obtained by performing bbm's on the {\it first moving} strand of elements in $\Lambda^{aug}$. It is worth mentioning that although $\Lambda^{aug} \supset \Lambda$, the advantage of considering elements in the augmented set $\Lambda^{aug}$ is that we restrict the performance of the braid band moves only on the first moving strand and, thus, we obtain less equations and more control on the infinite system (\ref{result}).

\subsection{The main result}

We now proceed with stating the main result of this paper.

\begin{thm}\label{mainre}
Equations obtained by performing braid band moves on the $m^{th}$-moving strand on an element in $\Lambda$ are combinations of equations obtained by performing braid band moves on the $1^{st}$-moving strand on elements in $\Lambda^{aug}$ of lower order. Namely, the following diagram commutes for all $k$ and for $a_i\in \mathbb{C}\left[q^{\pm 1}, z^{\pm 1} \right]$:

\[
\begin{array}{cccccc}
\Lambda_{(k)} & \ni & T & \overset{bbm}{\underset{m^{th}-mov. str.}{\longrightarrow}} & t^pT_{+}\sigma_m\ldots \sigma_2 \sigma_1^{\epsilon}\sigma^{-2}\ldots \sigma_m^{-1}& \\
& & | & & |& \\
& & {conj.}\ \& \ {stab.} & & {conj.}\ \& \ {stab.}& \\
& & \downarrow & & \downarrow & \\
\Lambda^{aug}_{(k)} & \ni & \underset{i}{\sum}a_i T_i & \overset{bbm}{\underset{1^{st}-mov. str.}{\longrightarrow}} & \underset{i}{\sum}a_i t^pT_{i_{+}}  \sigma_1^{\epsilon} & \in\ \Lambda^{aug}_{(p+k)}|H_n\\
\end{array}
\]
\end{thm}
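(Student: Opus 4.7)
The plan is to prove Theorem~\ref{mainre} by strong induction, with primary parameter the position of $T$ in the ordered set $\Lambda^{aug}_{(k)}$ (well-ordered by the natural extension of Definition~\ref{order} together with \cite[Propositions~1 \& 2]{DL2}), and secondary parameter the index $m$ of the moving strand carrying the bbm. The base of the secondary induction, $m=1$, is tautological, while the base of the primary induction concerns monomials of index~$1$, that is, elements of the form $t^{k_0}t_1^{k_1}$, to be handled separately in Proposition~\ref{indba}.

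For the inductive step I would take $T=t^{k_0}t_1^{k_1}\cdots t_n^{k_n}\in\Lambda_{(k)}$ together with $bbm_m(T)=t^p T_{+}\sigma_m\cdots\sigma_2\sigma_1^{\epsilon}\sigma_2^{-1}\cdots\sigma_m^{-1}$ for some $2\le m\le n+1$. First, I would apply the series of preparatory lemmata announced in the introduction to rewrite $T$, via conjugation in $B_{1,n+1}$ and stabilization moves in $B_{1,n+2}$, as a combination $\sum_i a_i T_i$ of elements of $\Lambda^{aug}_{(k)}$ of strictly lower order; such conjugations typically disorder the exponents of neighbouring looping generators (so the landing set is $\Lambda^{aug}$ rather than $\Lambda$) while preserving the total exponent sum $k$. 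Then I would invoke Proposition~\ref{prop}, which, combining \cite{DL2, DLP}, identifies the result of a bbm on the $m^{th}$ moving strand of $T$ with a combination of bbm's on the $j^{th}$ moving strand ($j<m$) of elements of $\Lambda^{aug}_{(k)}$ that are strictly lower than $T$. The secondary induction on $m$ then drives $j$ down to~$1$, while the primary induction guarantees that the lower-order summands so produced have already been reduced to bbm's on the first strand, closing the loop.

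The main obstacle will be establishing strict descent in the ordering of Definition~\ref{order} at each conjugation/stabilization step. Conjugation by $\sigma_i$, combined with the relations $t\sigma_i=\sigma_i t$ for $i>1$ and $\sigma_1 t\sigma_1 t=t\sigma_1 t\sigma_1$, redistributes the looping generators and produces terms which, while remaining of level $k$, must be carefully compared with $T$ in the lex-plus-index ordering of Definition~\ref{order}. A case analysis at the top index is required to confirm that every new summand is strictly smaller than $T$, ensuring that the induction hypothesis becomes applicable. A secondary subtlety is interlacing the reduction of $T$ with the tail $\sigma_m\cdots\sigma_2\sigma_1^{\epsilon}\sigma_2^{-1}\cdots\sigma_m^{-1}$ produced by the bbm: one must verify that, under the same conjugation and stabilization moves used on $T$, this tail collapses to the clean form $\sigma_1^{\epsilon}$, so that after all reductions the right-hand column of the diagram lies in $\sum_i a_i t^p T_{i_+}\sigma_1^{\epsilon}\in\Lambda^{aug}_{(p+k)}|{\rm H}_n$, as required.
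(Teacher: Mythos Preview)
Your proposal is correct and follows essentially the same approach as the paper: strong double induction on the order of $T$ in $\Lambda^{aug}_{(k)}$ and on the strand index $m$, with Proposition~\ref{indba} as the base case, Lemma~\ref{ordlem} supplying the descent $T\,\widehat{\cong}\,\sum_i a_iT_i$ with $T_i<T$, and Proposition~\ref{prop} reducing $bbm_m$ to $bbm_{j_i}$ with $j_i<m$ on lower-order summands. The paper organizes the inductive step into two cases ($m=2$ and $m>2$) rather than treating them uniformly, but the underlying mechanism is the one you describe, and the two obstacles you flag---strict descent in Definition~\ref{order} and the collapse of the tail $\sigma_m\cdots\sigma_1^{\epsilon}\cdots\sigma_m^{-1}$ to $\sigma_1^{\epsilon}$---are precisely what Lemma~\ref{ordlem} and Proposition~\ref{prop} (via \cite[Theorem~10]{DL2} and \cite[Proposition~5, Theorem~8]{DLP}) are there to handle.
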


\noindent The above diagram is illustrated in Figure~\ref{thmm}. We prove Theorem~\ref{mainre} by strong induction on the order of elements in $\Lambda^{aug}$ and on the moving strand where the bbm is performed. The basis of the induction concerns elements in $\Lambda$ of the form $t^{k_0} t_1^{k_1}$, where $k_0, k_1 \in \mathbb{Z}$. We now present an example illustrating Theorem~\ref{mainre}.

\begin{ex}\rm
Let $tt_1^2\in \Lambda_{(3)}$. Performing a positive braid band move on the second moving strand of $tt_1^2$ we obtain $t^pt_1t_2^2\cdot \sigma_2\sigma_1\sigma_2^{-1}$. We now compute ${\rm tr}(tt_1^2)$ and ${\rm tr}(t^pt_1t_2^2\cdot g_2g_1g_2^{-1})$:

\begin{equation}\label{eql0}
\begin{array}{rll}
{\rm tr}(tt_1^2) & = & (q^2-q+1)\cdot {\rm tr}(t_1^2t_2)\ +\ q(q-1)z\cdot {\rm tr}(t_1^3),\\
&&\\
{\rm tr}(t^pt_1t_2^2\cdot g_2g_1g_2^{-1}) & = & (q^2-q+1)\cdot {\rm tr}(t^pt_1^2t_2\cdot g_1)\ +\ q(q-1)z\cdot {\rm tr}(t^pt_1^3g_1)\ \quad {\rm and\ so}\\
\end{array}
\end{equation}

\begin{equation}\label{eql1}
X_{\widehat{tt_1^2}}\ =\ X_{\widehat{t^pt_1t_2^2\cdot \sigma_2\sigma_1\sigma_2^{-1}}}\Leftrightarrow {\rm tr}(tt_1^2)\ =\ - \frac{\lambda^3}{z}\cdot (q^2-q+1){\rm tr}(t^pt_1^2t_2g_1)\ -\ \lambda^3\cdot q(q-1){\rm tr}(t^pt_1^3g_1).
\end{equation}

\smallbreak

\noindent Consider now the elements $t^3, t^2t_1 \in \Lambda^{aug}_{(3)}$ and perform a positive braid band move on their first moving strand. We have that:

\begin{equation}\label{eql2}
\begin{array}{ccccccc}
X_{\widehat{t^3}} & = & X_{\widehat{t^pt_1^3\sigma_1}} & \Rightarrow & {\rm tr}(t^3) & = & - \frac{\lambda^3}{z}\cdot {\rm tr}(t^pt_1^3g_1)\\
&&&&&&\\
X_{\widehat{t^2t_1}} & = & X_{\widehat{t^pt_1^2t_2\sigma_1}} & \Rightarrow & {\rm tr}(t^2t_1) & = & - \frac{\lambda^3}{z}\cdot {\rm tr}(t^pt_1^2t_2g_1).\\
\end{array}
\end{equation}
 
\noindent Substituting to Eq.~\ref{eql1}, we obtain: 

$$
{\rm tr}(tt_1^2)\ =\ (q^2-q+1)\cdot {\rm tr}(t^2t_1)\ -\ q(q-1)z\cdot {\rm tr}(t^3),
$$

\noindent which is true (Equation~\ref{eql0}). Thus, we have shown that Equation~\ref{eql1} is a combination of Equations~\ref{eql2}, that is, the equation obtained from $tt_1^2\in \Lambda_{(3)}$ by performing bbm on its second moving strand is equivalent to equations obtained from $t^3, t^2t_1 \in \Lambda^{aug}_{(3)}$ by performing bbm's on their first moving strand.
\end{ex}

\begin{figure}
\begin{center}
\includegraphics[width=2.9in]{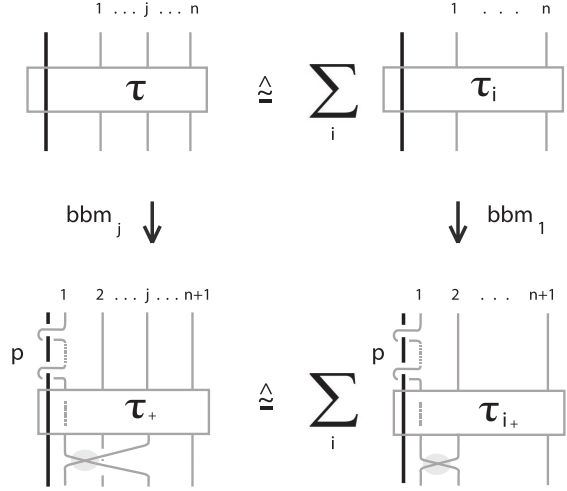}
\end{center}
\caption{An illustration of Theorem~\ref{mainre}.}
\label{thmm}
\end{figure}

\subsection{Some useful lemmata}

We now present some useful results which are crucial for the proof of Theorem~\ref{mainre}. We start by expressing the trace of the element $tt_1^k \in \Lambda_{(k+1)}^{aug}$ as a linear combination of traces of lower order elements in $\Lambda_{(k+1)}^{aug}$. We first recall the following equalities from \cite{La2}:

\begin{equation}\label{l2}
t_n^kg_n \ = \ sign(k) \cdot (q-1)\sum_{j=0}^{k-1}{q^jt_{n-1}^jt_n^{k-j}}+q^kg_nt_{n-1}^k,\quad {\rm where}\ k\in \mathbb{Z}.
\end{equation}

\begin{lemma}\label{old2}
The following relations hold in ${\rm H}_{1, n}(q)$:
\[
\begin{array}{lllll}
(i) & {\rm For}\ k \in \mathbb{N}: & t_n^kg_{n+1} & = & q^{-(k-1)}g_{n+1}^{-1} t_{n+1}^k \ + \ (q^{-1}-1)\underset{j=0}{\overset{k-2}{\sum}}{q^{-j}t_{n}^{k-j-1}t_{n+1}^{j+1}}.\\
&&&&\\
(ii) & {\rm For}\ k \in \mathbb{Z}\setminus \mathbb{N}: & t_n^kg_{n+1} & = & q^{k}g_{n+1}^{-1} t_{n+1}^k \ + \ (q^{-1}-1)\underset{j=1}{\overset{-k}{\sum}}{q^{k+j}t_{n}^{-j}t_{n+1}^{k+j}}.\\
\end{array}
\]
\end{lemma}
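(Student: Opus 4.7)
Both identities would follow by induction on $|k|$, with base cases coming from the conjugation relation $t_{n+1}=g_{n+1}t_n g_{n+1}$ combined with the quadratic Hecke relation $g_{n+1}^{2}=(q-1)g_{n+1}+q$ (equivalently, $g_{n+1}^{-1}=q^{-1}g_{n+1}+q^{-1}-1$). For part (i), the case $k=1$ reduces to $t_n g_{n+1}=g_{n+1}^{-1}t_{n+1}$, obtained by multiplying the conjugation relation on the left by $g_{n+1}^{-1}$, and the sum in the formula is empty. For part (ii), the case $k=-1$ is derived by inverting this relation to get $t_n^{-1}g_{n+1}^{-1}=g_{n+1}t_{n+1}^{-1}$ and then replacing $g_{n+1}^{-1}$ on the left by $q^{-1}g_{n+1}+q^{-1}-1$ and solving for $t_n^{-1}g_{n+1}$.

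The workhorse for the inductive step is the auxiliary identity
\[
t_n g_{n+1}^{-1} \ = \ q^{-1}g_{n+1}^{-1}t_{n+1}+(q^{-1}-1)\,t_n,
\]
obtained by expanding $g_{n+1}^{-1}$ via the quadratic relation and applying the $k=1$ base case. This rule lets one push $g_{n+1}^{-1}$ past a single $t_n$ at the cost of a lower-order term. To prove (i), I would write $t_n^{k}g_{n+1}=t_n^{k-1}(t_n g_{n+1})=t_n^{k-1}g_{n+1}^{-1}t_{n+1}$ and iterate the auxiliary identity on $t_n^{k-1}g_{n+1}^{-1}=t_n^{k-2}(t_n g_{n+1}^{-1})$; after the reindexing $j\mapsto j+1$ this produces the expansion $t_n^{k-1}g_{n+1}^{-1}=q^{-(k-1)}g_{n+1}^{-1}t_{n+1}^{k-1}+(q^{-1}-1)\sum_{j=0}^{k-2}q^{-j}t_n^{k-1-j}t_{n+1}^{j}$. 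Multiplying on the right by $t_{n+1}$ and using that the looping elements $t_n$ and $t_{n+1}$ commute (a standard fact in $\mathrm{H}_{1,n}(q)$) gives exactly the claimed formula.

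Part (ii) is handled by the symmetric induction on $-k$, driven by the negative-exponent analogue
\[
t_n^{-1}g_{n+1} \ = \ q^{-1}g_{n+1}^{-1}t_{n+1}^{-1}+(q^{-1}-1)\,t_n^{-1},
\]
pushing $g_{n+1}$ past successive factors of $t_n^{-1}$; each step deposits a correction term that fills in the sum $\sum_{j=1}^{-k}q^{k+j}t_n^{-j}t_{n+1}^{k+j}$. The only genuine difficulty in either case is bookkeeping: one must verify that the correction produced at the current stage of the recursion combines with the inductive expansion of the previous stage to yield again a geometric sum with the prescribed $q$-powers and summation limits, after the shift of index. Once this telescoping is checked, both formulas drop out, and the Lemma is established uniformly for all $k\in\mathbb{Z}\setminus\{0\}$.
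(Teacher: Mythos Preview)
Your proposal is correct and follows essentially the same route as the paper. Both arguments are inductions on $|k|$ resting on the two ingredients you name: the base case $t_n g_{n+1}=g_{n+1}^{-1}t_{n+1}$ (from $t_{n+1}=g_{n+1}t_n g_{n+1}$) and the quadratic relation $g_{n+1}^{-1}=q^{-1}g_{n+1}+(q^{-1}-1)$. The paper writes $t_n^{k+1}g_{n+1}=t_n\cdot(t_n^{k}g_{n+1})$, applies the induction hypothesis, and then expands the leading $t_n\cdot g_{n+1}^{-1}$; your ``auxiliary identity'' $t_n g_{n+1}^{-1}=q^{-1}g_{n+1}^{-1}t_{n+1}+(q^{-1}-1)t_n$ is precisely that expansion isolated as a lemma, and your iteration of it is the same induction repackaged. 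One small remark: you invoke commutativity of $t_n$ and $t_{n+1}$, but in your own derivation every $t_{n+1}$ already sits on the right, so right-multiplication by $t_{n+1}$ needs no commutation; the paper's proof likewise never uses it.
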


\begin{proof}
We only prove relations (i) by induction on $k\in \mathbb{N}$. Relations (ii) follow similarly. For $k=1$ we have that $t_ng_{n+1} \ = \ g_{n+1}^{-1} t_{n+1}^k$ which holds. Assume now that the relation holds for $k$. Then, for $k+1$ we have:
\[
\begin{array}{lclc}
t_n^{k+1}g_{n+1} & = & t_n\cdot \underline{t_n^{k}g_{n+1}} \ \overset{ind.}{\underset{step}{=}}\ q^{-(k-1)}t_n\cdot \underline{g_{n+1}^{-1}} t_{n+1}^k\ +\ (q^{-1}-1)\sum_{j=0}^{k-2}{q^{-j}t_n\cdot t_{n}^{k-j-1}t_{n+1}^{j+1}} & =\\
&&&\\
& = & q^{-(k-1)}t_n\cdot \left(q^{-1}g_{n+1}+(q^{-1}-1)\right) t_{n+1}^k\ +\ (q^{-1}-1)\sum_{j=0}^{k-2}{q^{-j} t_{n}^{k-j}t_{n+1}^{j+1}} & =\\
&&&\\
& = & q^{-k}\underline{t_n\cdot g_{n+1}} t_{n+1}^k \ +\ q^{-(k-1)}(q^{-1}-1) t_n t_{n+1}^k\ +\ (q^{-1}-1)\sum_{j=0}^{k-2}{q^{-j} t_{n}^{k-j}t_{n+1}^{j+1}} & =\\
&&&\\
& = & q^{-k}g_{n+1}^{-1} t_{n+1}^{k+1}\ +\ (q^{-1}-1)\sum_{j=0}^{k-1}{q^{-j}t_{n}^{k-j}t_{n+1}^{j+1}}.&
\end{array}
\]
\end{proof}

\begin{nt}\rm
In what follows, for the expressions that we obtain after appropriate conjugations we shall use the notation $\widehat{=}$. We will also use the symbol $\simeq$ when a stabilization move is performed and $\widehat{\simeq}$ when both stabilization moves and conjugation are used. Note that by stabilization move we mean application of the third rule of the trace.
\end{nt}

It is worth mentioning that conjugation and stabilization moves are both captured by the properties of the trace tr, and, so, if $T \in \Lambda$ such that $T \ \widehat{\cong}\  \underset{i}{\sum}{a_i \tau_i}$ for some coefficients $a_i \in  \mathbb{C}\left[q^{\pm 1}, z^{\pm 1} \right]$ and $\tau_i \in \Lambda^{aug}$ for all $i$, then ${\rm tr}(T)\ =\ {\rm tr}(\underset{i}{\sum}{a_i \tau_i})\ =\ \underset{i}{\sum}{a_i {\rm tr}(\tau_i)}$. 


\smallbreak

We now present a key result needed for the proof of Theorem~\ref{mainre}.

\begin{lemma}\label{ordlem}
Applying conjugation and stabilization moves, elements in $\Lambda^{aug}$ can be expressed as sums of elements in $\Lambda^{aug}$ of lower order. Namely:
$$\Lambda^{aug}\ \ni\  \tau\ \widehat{\cong}\ \underset{i}{\sum} a_i\cdot \tau_i,\ {\rm where}\ \Lambda^{aug}\ \ni\ \tau_i < \tau\ \forall\ i. $$
\end{lemma}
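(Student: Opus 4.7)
The plan is to proceed by strong induction on the well-ordering of $\Lambda^{aug}_{(k)}$ at each fixed level $k$, which is a well-ordering by \cite[Propositions~1--2]{DL2}. The base case is the minimum element $\tau=t^k$, for which the statement is vacuously true.

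For the inductive step, fix a non-minimum element $\tau=t^{k_0}t_1^{k_1}\cdots t_n^{k_n}$ with $n\geq 1$ and factor it as $\tau=\beta\,t_n^{k_n}$, where $\beta=t^{k_0}t_1^{k_1}\cdots t_{n-1}^{k_{n-1}}\in{\rm H}_{1,n}(q)$. The idea is to apply Eq.~\ref{l2} (and Lemma~\ref{old2}(ii) when $k_n<0$) to rewrite $t_n^{k_n}$ as a combination of monomials each carrying a trailing $g_n$-factor. For $k_n>0$, rearranging Eq.~\ref{l2} and using the Hecke identity $g_n-(q-1)=qg_n^{-1}$ gives
\begin{equation*}
t_n^{k_n}\ =\ q^{k_n-1}\,g_n\,t_{n-1}^{k_n}\,g_n\ +\ (1-q^{-1})\sum_{j=1}^{k_n-1}q^{j}\,t_{n-1}^{j}\,t_n^{k_n-j}\,g_n.
\end{equation*}
Left-multiplying by $\beta$ and applying cyclic conjugation in the trace (together with the commutation of $g_n$ with $t_0,\ldots,t_{n-2}$), every summand is brought into one of two forms: (i) a monomial $t^{k_0}\cdots t_{n-2}^{k_{n-2}}\,t_{n-1}^{k_{n-1}+j}\,t_n^{k_n-j}$ with top exponent satisfying $|k_n-j|<|k_n|$, hence strictly below $\tau$ by rule~($\beta$) of Definition~\ref{order}; or (ii) a monomial involving $g_n\,t_{n-1}^{k_n}\,g_n$ which, after iterated application of $g_n\,t_{n-1}\,g_n=t_n$ and $g_n^2=(q-1)g_n+q$, becomes a combination of monomials of index strictly less than $n$, hence strictly below $\tau$ by rule~(b)(i).

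The main obstacle will be the careful bookkeeping needed to recast each intermediate expression into valid $\Lambda^{aug}$-form, that is, with consecutive indices $0,1,\ldots,m$ and non-zero exponents. When some intermediate exponent collapses to zero (for instance, if $k_{n-1}+j=0$ in a summand of type (i)), a gap appears in the index sequence and the resulting monomial technically falls outside $\Lambda^{aug}$; these anomalies must be repaired via further conjugations and stabilizations, or, where needed, by invoking the inductive hypothesis on the ``regularized'' sub-expression. The commutativity of the looping generators $t_i\,t_j=t_j\,t_i$ (as Jucys--Murphy-type elements in the Hecke algebra) is used throughout to re-order factors. Termination of the recursive reduction follows from the well-ordering of $\Lambda^{aug}_{(k)}$.
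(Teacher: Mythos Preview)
Your overall strategy is the right one and matches the paper's: rewrite the top looping generator via Eq.~\ref{l2} and argue that all resulting monomials are strictly smaller in the ordering. However, your description of what the two families of summands look like contains genuine errors.

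For the type-(i) summands $\beta\,t_{n-1}^{j}\,t_{n}^{k_{n}-j}\,g_{n}$: these carry a trailing $g_n$ that conjugation and stabilization alone cannot remove. Conjugation merely cycles $g_n$ to the front, where it still fails to commute with the factor $t_{n-1}^{k_{n-1}+j}$; and the stabilization rule $\mathrm{tr}(a g_n)=z\,\mathrm{tr}(a)$ does not apply, because the word already contains $t_n$ and hence lives in ${\rm H}_{1,n+1}(q)$, not ${\rm H}_{1,n}(q)$. A second application of Eq.~\ref{l2} to the tail $t_n^{k_n-j}g_n$ is needed before one lands in $\Lambda^{aug}$; the paper carries this out explicitly.

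For the type-(ii) term $\beta\,g_n\,t_{n-1}^{k_n}\,g_n$: your claim that it reduces to monomials of index strictly less than $n$ is false. Invoking $g_n t_{n-1} g_n = t_n$ \emph{raises} the index rather than lowering it. What actually happens---and what the paper does---is that one pushes the left $g_n$ across $t_{n-1}^{k_{n-1}}$ via Lemma~\ref{old2}; this produces, alongside lower-index terms, the index-$n$ monomial
\[
q^{k_n-k_{n-1}}\,t^{k_0}\cdots t_{n-2}^{k_{n-2}}\,t_{n-1}^{k_n}\,t_n^{k_{n-1}},
\]
in which the top two exponents have been \emph{swapped}. This monomial sits below $\tau$ by rule~(b)(ii)($\beta$) of Definition~\ref{order} (smaller top exponent in absolute value), not by the index rule~(b)(i). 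This exponent-swap is the essential mechanism of the reduction and your proposal misses it entirely.
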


\begin{proof}
Let $\tau\ =\ t^{k_0}t_1^{k_1}\ldots t_m^{k_m} \in \Lambda^{aug}_{(k)}$, where $k_{m-1}, k_m \in \mathbb{N}$. The case $k_{m-1}, k_m \in \mathbb{Z}$ follows similarly. Applying Eq.~\ref{l2} and Lemma~\ref{old2} we have that:

\[
\begin{array}{ccl}
t^{k_0}t_1^{k_1}\ldots \underline{t_m^{k_m}} & = & t^{k_0}t_1^{k_1}\ldots \underline{t_m^{k_m-1}g_m}t_{m-1}g_m\ \overset{(\ref{l2})}{\widehat{\cong}}\ (q-1)\; \underset{j=0}{\overset{k_m-2}{\sum}}q^j t^{k_0}\ldots t_{m-1}^{k_{m-1}+1-j}\underline{t_m^{k_m-1-j}\cdot g_m}\ +\\
&&\\
&+& q^{k_m-1}\; t^{k_0}\ldots \underline{t_{m-1}^{k_{m-1}}g_m}t_{m-1}^{k_m}g_m\ \overset{(\ref{l2})}{\underset{Lemma~\ref{old2}}{\widehat{\cong}}}\ (q-1)z\; \underset{j=0}{\overset{k_m-2}{\sum}}q^{k_m-1}t^{k_0}\ldots t_{m-1}^{k_{m-1}+k_m}\ +\\
&&\\
& + & (q-1)^2\; \underset{j=0}{\overset{k_m-2}{\sum}}\underset{\phi=0}{\overset{k_m-2-j}{\sum}}q^{j+\phi} t^{k_0}\ldots t_{m-1}^{k_{m-1}+1+j+\phi}t_m^{k_{m}-1-j-\phi}\ +\ \\
&&\\
& + & q^{k_m-k_{m-1}}\; t^{k_0}\ldots t_{m-1}^{k_m}t_m^{k_{m-1}}\ -\ q^{k_{m}-1}(q-1)z(k_{m-1}-1)\; t^{k_0}\ldots t_{m-1}^{k_{m-1}+k_m}\ - \\
&&\\
& - & q^{k_m-2}(q-1)^2\; \underset{\phi=0}{\overset{k_{m-1}-2}{\sum}}\underset{j=0}{\overset{\phi}{\sum}} q^{j-\phi} t^{k_0}\ldots t_{m-1}^{k_{m-1}-\phi-1+k_m+j}t_m^{\phi+1-j}.\\
\end{array}
\]

\noindent By Definition~\ref{order}, monomials on the right-hand side of the relation above are in $\Lambda^{aug}$ and they are of lower order than the initial monomial $\tau$. This concludes the proof.
\end{proof}

\begin{remark}\rm
Conjugation and stabilization moves that are used in the proof make Lemma~\ref{ordlem} valid on the level of the trace. It is worth mentioning though that on the level of the algebra ${\rm H}_{1, n}(q)$, Lemma~\ref{ordlem} is not valid. 
\end{remark}

\begin{lemma}\label{lem1}
The following relations hold for $k_0, k_1 \in \mathbb{Z}$:
\[
\begin{array}{llll}
t^{k_0}t_1^{k_1} & \widehat{\simeq} & q^{k_1-k_0}\cdot t^{k_1}t_1^{k_0} \ + \ q^{k_1-1} (q-1)\cdot z\cdot (k_1-k_0)\cdot t^{k_0+k_1}& +\\
&&&\\
& + & (q-1)^2 \ \underset{j=0}{\overset{k_1-2}{\sum}}\ \underset{\phi=0}{\overset{k_1-2-j}{\sum}}{q^{j+\phi}}t^{k_0+j+1+\phi}t_1^{k_1-1-j-\phi}& -\\
& -& (q-1)^2 \ \underset{j=0}{\overset{k_0-2}{\sum}}\ \underset{\phi=0}{\overset{j}{\sum}}{q^{k_1-j-2+\phi}}t^{k_0+k_1-j-1+\phi}t_1^{j+1-\phi}.& \\
\end{array}
\]
\end{lemma}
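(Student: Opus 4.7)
\medskip

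\noindent\textbf{Proof proposal.} The plan is to mimic the strategy used in the proof of Lemma~\ref{ordlem}, specialized to $m=1$, but to push the computation one step further so that the symmetric swap $t^{k_1}t_1^{k_0}$ appears. Throughout the argument we use freely the identity $tt_1=t_1t$ (which is a direct consequence of the quadratic--free braid relation $\sigma_1 t\sigma_1 t=t\sigma_1 t\sigma_1$ applied to $t_1=g_1tg_1$), together with the cyclic property of $\mathrm{tr}$ and rule (3) of Theorem~\ref{tr}, abbreviated by $\widehat{\simeq}$.

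First I would write $t^{k_0}t_1^{k_1}=t^{k_0}t_1^{k_1-1}g_1\cdot tg_1$, and apply Eq.~\ref{l2} to the underlined factor $t_1^{k_1-1}g_1$. This splits $t^{k_0}t_1^{k_1}$ into two pieces: a sum
$(q-1)\sum_{j=0}^{k_1-2}q^j t^{k_0+j+1}t_1^{k_1-1-j}g_1$
(after absorbing the trailing $tg_1$ with the commutation $tt_1=t_1t$), and a tail $q^{k_1-1}t^{k_0}g_1t^{k_1}g_1$. To the sum I would apply Eq.~\ref{l2} a second time to $t_1^{k_1-1-j}g_1$; this produces exactly the third (double) summand of the claim, plus braiding residues $(q-1)q^{k_1-1}t^{k_0+j+1}g_1t^{k_1-1-j}$. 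Cyclically rotating $t^{k_1-1-j}$ past the $g_1$ (using $t\leftrightarrow t$ commutativity) and then stabilizing the solitary $g_1$ via rule (3), each residue collapses to $z\cdot t^{k_0+k_1}$, giving a total contribution of $(k_1-1)(q-1)q^{k_1-1}z\,t^{k_0+k_1}$.

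The delicate step is the tail $q^{k_1-1}t^{k_0}g_1t^{k_1}g_1$: here I would apply Lemma~\ref{old2}(i) to the \emph{leftmost} factor $t^{k_0}g_1$. This is the crux of the argument, because the symmetric choice of applying it to $t^{k_1}g_1$ only reproduces $t^{k_0}t_1^{k_1}$ and yields a circular identity. The correct choice gives
\[
q^{k_1-1}t^{k_0}g_1t^{k_1}g_1 \;=\; q^{k_1-k_0}g_1^{-1}t^{k_1}t_1^{k_0}g_1 \;+\; q^{k_1-1}(q^{-1}-1)\sum_{j=0}^{k_0-2}q^{-j}t^{k_0+k_1-j-1}t_1^{j+1}g_1,
\]
using $tt_1=t_1t$ to pull $t^{k_1}$ past $t_1^{k_0}$ and $t_1^{j+1}$. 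The leading term is cyclically equivalent to $q^{k_1-k_0}t^{k_1}t_1^{k_0}$ since $\mathrm{tr}(g_1^{-1}Xg_1)=\mathrm{tr}(X)$; this yields the first summand of the claim. For the remainder, a third application of Eq.~\ref{l2} to $t_1^{j+1}g_1$ produces the fourth summand (with sign and power of $q$ coming from $(q^{-1}-1)(q-1)=-(q-1)^2/q$), together with fresh braiding residues $-q^{k_1-1}(q-1)t^{k_0+k_1-j-1}g_1t^{j+1}$, each of which again stabilizes to $-(q-1)q^{k_1-1}z\,t^{k_0+k_1}$, summing to $-(k_0-1)(q-1)q^{k_1-1}z\,t^{k_0+k_1}$.

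Combining the two contributions to the $t^{k_0+k_1}$ coefficient gives $\bigl[(k_1-1)-(k_0-1)\bigr](q-1)q^{k_1-1}z=(k_1-k_0)(q-1)q^{k_1-1}z$, which is precisely the second summand in the statement. Collecting the four resulting pieces yields the displayed identity. The case $k_0,k_1\in\mathbb{Z}\setminus\mathbb{N}$ is entirely analogous, using instead the sign $(-1)$ version of Eq.~\ref{l2} and Lemma~\ref{old2}(ii), with $q^{-(k-1)}$ and $(q^{-1}-1)$ replaced by $q^{k}$ and the corresponding negative--exponent prefactors; no new ideas are required. The main obstacle, as noted, is the asymmetric choice of where to apply Lemma~\ref{old2}(i) in the middle step: this is what breaks the apparent circularity and extracts the swapped monomial $t^{k_1}t_1^{k_0}$.
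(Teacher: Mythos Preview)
Your proposal is correct and follows essentially the same route as the paper's own proof: the paper also writes $t^{k_0}t_1^{k_1}=t^{k_0}t_1^{k_1-1}\sigma_1\cdot t\sigma_1$, applies Eq.~\ref{l2} to $t_1^{k_1-1}\sigma_1$ to split into the two pieces you call the ``sum'' and the ``tail'' $B=q^{k_1-1}t^{k_0}\sigma_1 t^{k_1}\sigma_1$, re-applies Eq.~\ref{l2} inside the sum, and then applies Lemma~\ref{old2}(i) to the left factor $t^{k_0}\sigma_1$ in $B$ before one last use of Eq.~\ref{l2}. Your explicit remark about why Lemma~\ref{old2} must hit the left factor (to avoid circularity) is a helpful addition not spelled out in the paper, but the computation itself is the same.
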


\begin{proof}
We only prove the case where $k_0, k_1 \in \mathbb{N}$ applying Eq.~\ref{l2} and Lemma~\ref{old2} on $t^{k_0}t_1^{k_1}$. All other cases for $k_0, k_1$ follow similarly.

\[
\begin{array}{llll}
t^{k_0}t_1^{k_1} & = & t^{k_0}\underline{t_1^{k_1-1}\cdot \sigma_1}t\sigma_1 \ \overset{(\ref{l2})}{=}\ t^{k_0}\cdot \left[ (q-1) \underset{j=0}{\overset{k_1-2}{\sum}}q^j\cdot t^jt_1^{k_1-1-j}\ +\ q^{k_1-1}\cdot \sigma_1t^{k_1-1}\right]\cdot t\sigma_1 & =\\
&&&\\
& = & \underset{A}{\underbrace{(q-1) \underset{j=0}{\overset{k_1-2}{\sum}}q^j\cdot t^{k_0+j+1}t_1^{k_1-1-j}\sigma_1}}\ +\ \underset{B}{\underbrace{q^{k_1-1}\cdot t^{k_0} \sigma_1t^{k_1}\sigma_1}}.\quad {\rm Moreover: } & \\
\end{array}
\]

$A\ =\  (q-1) \underset{j=0}{\overset{k_1-2}{\sum}}q^j\cdot t^{k_0+j+1}\underline{t_1^{k_1-1-j}\sigma_1} \ \overset{(\ref{l2})}{\widehat{\simeq}} $

\smallbreak

${\widehat{\simeq}}\ (q-1) \underset{j=0}{\overset{k_1-2}{\sum}}q^j\cdot t^{k_0+j+1}\cdot \left[ (q-1) \underset{\phi=0}{\overset{k_1-2-j}{\sum}}q^{\phi}\cdot t^{\phi}t_1^{k_1-1-j-\phi}\ +\ q^{k_1-1-j}\cdot \sigma_1 t^{k_1-1-j}  \right]\ \widehat{\simeq}$

\smallbreak

$\widehat{\simeq}\ (q-1)^2 \underset{j=0}{\overset{k_1-2}{\sum}} \underset{\phi=0}{\overset{k_1-2-j}{\sum}} q^{j+\phi}\cdot t^{k_0+j+1+\phi}{t_1^{k_1-1-j-\phi}} \ +\  (q-1)\cdot q^{k_1-1}\cdot z\cdot (k_1-1) \cdot t^{k_0+k_1}$

\smallbreak

\noindent and

\smallbreak

$B\ =\ q^{k_1-1}\cdot \underline{t^{k_0} \sigma_1} \cdot t^{k_1}\sigma_1 \ \overset{Lemma~\ref{old2}}{\widehat{\simeq}}\ q^{k_1-1}\cdot \left[ q^{-(k_0-1)}\sigma_1^{-1}t_1^{k_0}\ +\ \underset{j=0}{\overset{k_0-2}{\sum}} q^{-j}(q^{-1}-1)t^{k_0-j-1}t_1^{j+1} \right]\cdot t^{k_1} \sigma_1\ \widehat{\simeq}$

\smallbreak

$\widehat{\simeq}\ q^{k_1-k_0}\cdot t^{k_1}t_1^{k_0}\ +\ \underset{j=0}{\overset{k_0-2}{\sum}} q^{k_1-j-1}(q^{-1}-1)t^{k_0+k_1-j-1}\underline{t_1^{j+1}\sigma_1} \ \overset{(\ref{l2})}{\widehat{\simeq}}$

\smallbreak

$\widehat{\simeq}\ q^{k_1-k_0}\cdot t^{k_1}t_1^{k_0}\ +\ \underset{j=0}{\overset{k_0-2}{\sum}} q^{k_1-j-1}(q^{-1}-1)t^{k_0+k_1-j-1}\cdot \left[ (q-1)\cdot \underset{\phi=0}{\overset{j}{\sum}} q^{\phi} t_1^{j+1-\phi}\ +\ q^{j+1}\sigma_1t^{j+1} \right]\ \widehat{\simeq}$

$\widehat{\simeq}\  q^{k_1-k_0}\cdot t^{k_1}t_1^{k_0}\ +\ (q-1)(q^{-1}-1)\underset{j=0}{\overset{k_0-2}{\sum}} \underset{\phi=0}{\overset{j}{\sum}} q^{k_1-j-1+\phi}t^{k_0+k_1-j-1+\phi}\ +\ q^{k_1}(q^{-1}-1)\cdot z\cdot (k_0-1)\cdot t^{k_0+k_1}$.

\end{proof}

\subsection{The induction steps}

We shall first demonstrate Theorem~\ref{mainre} in the simplest non-trivial case, namely, monomials of index $1$, which comprise the simplest examples demonstrating the theorem. This will serve as the basis for the induction argument in the proof of Theorem~\ref{mainre}.

\begin{prop}\label{indba}
Let $k_0, k_1\in \mathbb{Z}$ and $t^{k_0}t_1^{k_1}\in \Lambda_{(k)}$. Then, the equations $X_{\widehat{t^{k_0}t_1^{k_1}}}=X_{\widehat{t^p t_1^{k_0}t_2^{k_1}\cdot \sigma_2\sigma_1^{\epsilon}\sigma_2^{-1}}}$ are linear combinations of equations of the form $X_{\widehat{t^{l_0}t_1^{l_1}}}=X_{\widehat{t^p t_1^{l_0}t_2^{l_1}\cdot \sigma_1^{\epsilon}}}$, where $t^{l_0}t_1^{l_1}\in \Lambda_{(k)}^{aug}$ and $t^{l_0}t_1^{l_1}< t^{k_0}t_1^{k_1}$ for all $l_0, l_1 \in \mathbb{Z}$. Furthermore, for $j>2$, the following diagram commutes:
\[
\begin{array}{cccc}
\Lambda_{(k)} \ni& t^{k_0}t_1^{k_1} & \overset{bbm_j}{\longrightarrow} & t^pt_1^{k_0}t_2^{k_1}\sigma_j\ldots \sigma_1^{\pm 1}\ldots \sigma_j^{-1} \\
& | && | \\
& conj.\ \& \ stab. & & conj.\ \& \ stab. \\
& \downarrow && \downarrow \\
\Lambda^{aug}_{(k)} \ni & \underset{i}{\sum}a_i t^{l_0}t_1^{l_1} & \overset{bbm_{1}}{\longrightarrow} & \underset{i}{\sum}a_i t^pt_1^{l_0}t_2^{l_1} \sigma_1^{\pm 1}\\
\end{array}
\]
\end{prop}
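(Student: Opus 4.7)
The plan is to handle first the substantive case $j=2$ and then reduce $j>2$ to it. For $j=2$, Lemma~\ref{lem1} directly provides a trace-level expansion
\[
T\ \widehat{\simeq}\ \sum_i a_i\,T_i',\qquad T_i'\in\Lambda^{aug}_{(k)},\ T_i'<T,
\]
for some $a_i\in\mathbb{C}[q^{\pm1},z^{\pm1}]$. Since $\widehat{\simeq}$ is realized by conjugations and stabilizations, both of which preserve $\mathrm{tr}$ and hence the Lambropoulou invariant $X$, we immediately obtain the ``LHS'' identity $X_{\widehat{T}}=\sum_i a_i X_{\widehat{T_i'}}$. The whole point is then to produce the parallel ``RHS'' identity $X_{\widehat{bbm_2(T)}}=\sum_i a_i X_{\widehat{bbm_1(T_i')}}$ with the \emph{same} coefficients $a_i$.

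To obtain the RHS identity, I intend to repeat the derivation of Lemma~\ref{lem1} inside the trace of $bbm_2(T)=t^p\,t_1^{k_0}t_2^{k_1}\,\sigma_2\sigma_1^\epsilon\sigma_2^{-1}$, this time applied to the ``shifted'' factor $t_1^{k_0}t_2^{k_1}$ (which is $T$ with all indices raised by one). The same algebraic inputs, Eq.~\ref{l2} and Lemma~\ref{old2} with indices shifted by one strand, produce a parallel expansion in which the conjugations use $\sigma_2$ and $t_1$ instead of $\sigma_1$ and $t$. The key will be that these moves can be performed \emph{underneath} the outer tail $\sigma_2\sigma_1^\epsilon\sigma_2^{-1}$: the braid relations together with the cyclic property of $\mathrm{tr}$ let the tail commute past the manipulations without altering the coefficients, so that each term $a_i\,t^{l_0^{(i)}}t_1^{l_1^{(i)}}$ in the expansion of $T$ is matched exactly by the term $a_i\,t^p t_1^{l_0^{(i)}}t_2^{l_1^{(i)}}\sigma_1^\epsilon=a_i\cdot bbm_1(T_i')$ in the expansion of $bbm_2(T)$. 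Combining the two parallel expansions with the inductively assumed equalities $X_{\widehat{T_i'}}=X_{\widehat{bbm_1(T_i')}}$ then forces $X_{\widehat{T}}=X_{\widehat{bbm_2(T)}}$, and rearranging exhibits the $bbm_2$-equation on $T$ as a $\mathbb{C}[q^{\pm1},z^{\pm1}]$-combination of $bbm_1$-equations on strictly smaller elements of $\Lambda^{aug}_{(k)}$.

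For $j>2$, I plan to reduce to the $j=2$ case by the cyclic property of $\mathrm{tr}$: rewriting
\[
\mathrm{tr}\bigl(bbm_j(T)\bigr)\ =\ \mathrm{tr}\bigl(\sigma_j^{-1}\cdots\sigma_3^{-1}\cdot t^p T_+\cdot\sigma_j\cdots\sigma_3\cdot\sigma_2\sigma_1^\epsilon\sigma_2^{-1}\bigr),
\]
and using that $\sigma_k$ commutes with $t$, $t_1$ and $t_2$ for every $k\geq 4$, the factors $\sigma_4,\ldots,\sigma_j$ pass through $T_+=t_1^{k_0}t_2^{k_1}$ and cancel with their inverses. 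The remaining conjugation $\sigma_3^{-1}T_+\sigma_3$ can then be expanded via Eq.~\ref{l2} and Lemma~\ref{old2} into a sum of terms whose inner factor is either $T_+$ itself or is of strictly lower order in $\Lambda^{aug}$; both types are then absorbed by the $j=2$ case (or by a secondary induction on~$j$). This gives the full commuting diagram.

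The hard part will be the bookkeeping in the $j=2$ step: verifying that the many coefficients generated by Eq.~\ref{l2} and Lemma~\ref{old2} in the two parallel expansions match term-by-term, so that the identities on $T$ and on $bbm_2(T)$ really share the same coefficients $a_i$. The worked example with $tt_1^2\in\Lambda_{(3)}$ preceding the statement is both the template and a reassuring reality check: the coefficients $(q^2-q+1)$ and $q(q-1)z$ appear identically in the two expansions of Eq.~\ref{eql0}, so that combining them via Eq.~\ref{eql2} yields exactly Eq.~\ref{eql1}; the general argument is intended to follow this same pattern.
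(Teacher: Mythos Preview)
Your approach for $j=2$ is essentially the paper's: apply Lemma~\ref{lem1} to get $T\ \widehat{\simeq}\ \sum_i a_i T_i'$, then carry out a parallel explicit computation on $bbm_2(T)$ using Eq.~\ref{l2} and Lemma~\ref{old2}, and verify the coefficients match. One framing point to correct: the tail $\sigma_2\sigma_1^{\epsilon}\sigma_2^{-1}$ does \emph{not} simply ``commute past'' a shifted replay of Lemma~\ref{lem1}. In the paper's computation the very first step is to apply Eq.~\ref{l2} to the product $t_2^{k_1}\cdot\sigma_2$, where that $\sigma_2$ is the leading letter of the tail; the tail is actively consumed and transformed throughout, and only at the end does the surviving braiding part collapse to~$\sigma_1^{\epsilon}$. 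Conjugation and stabilization are global (cyclic) operations on the whole word, so one cannot literally run the moves of Lemma~\ref{lem1} locally on the subword $t_1^{k_0}t_2^{k_1}$ while leaving the tail untouched. Since you already flag the term-by-term bookkeeping as ``the hard part,'' you would discover this when executing the computation; but your narrative explanation of \emph{why} the coefficients agree should be replaced by the direct calculation the paper performs.

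For $j>2$ the paper simply asserts ``we only need to prove the case $j=2$,'' whereas you attempt an explicit reduction. Your commutation argument is fine for $\sigma_4,\ldots,\sigma_j$, and treating the residual $\sigma_3$-conjugation via Eq.~\ref{l2} and Lemma~\ref{old2} is workable, but note that the reason the paper can dismiss $j>2$ is that $t^{k_0}t_1^{k_1}$ has only two moving strands, so in the inductive scheme of Theorem~\ref{mainre} only $bbm_1$ and $bbm_2$ actually arise for such monomials; the $j>2$ clause in the statement is not used downstream.
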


\begin{proof}

We only need to prove the case $j=2$ and we do that by considering $k_0, k_1 \in \mathbb{N}$. The case where both exponents are in $\mathbb{Z}\setminus \mathbb{N}$ and the case where one exponent is in $\mathbb{N}$ and the other in $\mathbb{Z}\backslash \mathbb{N}$ follow totally analogous.

\smallbreak

Consider the element $t^{k_0}t_1^{k_1}\in \Lambda_{(k)}$ and perform a bbm on its second moving strand. Then:
\[
\begin{array}{cccl}
t^{k_0}t_1^{k_1} & \overset{bbm_2}{\longrightarrow} & t^p t_1^{k_0}t_2^{k_1}\sigma_2\sigma_1^{\pm 1}\sigma_2^{-1} & {\rm and}\\ 
&&&\\
X_{\widehat{t^{k_0}t_1^{k_1}}} & = & X_{\widehat{t^p t_1^{k_0}t_2^{k_1}\sigma_2\sigma_1^{\pm 1}\sigma_2^{-1}}} & \Rightarrow\\
&&&\\
{\rm tr}\left(t^{k_0}t_1^{k_1} \right) & = & \Delta\cdot \left(\sqrt{\lambda } \right)^{a} {\rm tr}\left(t^p t_1^{k_0}t_2^{k_1}g_2g_1^{\pm 1}g_2^{-1} \right), &\\
\end{array}
\]
\noindent where $a=2\cdot (k_0+k_1)\pm 1\ =\ 2\cdot k \pm 1$.

\smallbreak

Consider now the elements $t^{l_0}t_1^{l_1}\in \Lambda^{aug}_{(k)}$ such that $t^{l_0}t_1^{l_1} < t^{k_0}t_1^{k_1}$ and perform a bbm on their first moving strand:
\[
\begin{array}{cccl}
t^{l_0}t_1^{l_1} & \overset{bbm_1}{\longrightarrow} & t^p t_1^{l_0}t_2^{l_1}\sigma_1^{\pm 1} & {\rm and}\\ 
&&&\\
X_{\widehat{t^{l_0}t_1^{l_1}}} & = & X_{\widehat{t^p t_1^{l_0}t_2^{l_1}\sigma_1^{\pm 1}}} &\Rightarrow \\
&&&\\
{\rm tr}\left(t^{l_0}t_1^{l_1} \right) & = & \Delta\cdot \left(\sqrt{\lambda } \right)^{a^{\prime}} {\rm tr}\left(t^p t_1^{l_0}t_2^{l_1}g_1^{\pm 1} \right), &\\
\end{array}
\]
\noindent where $a^{\prime}=2\cdot (l_0+l_1)\pm 1\ =\ 2\cdot k \pm 1$ again.

\bigbreak

\noindent It suffices to prove that if $t^{k_0}t_1^{k_1} \ \widehat{\simeq} \  \underset{i}{\sum} A_i\cdot t^{l_0}t_1^{l_1}$, for some coefficients $A_i$, then the following diagram commutes:
\[
\begin{array}{ccc}
t^{k_0}t_1^{k_1} & \widehat{\simeq} &  \underset{i}{\sum} A_i\cdot t^{l_0}t_1^{l_1}  \\
| & & | \\
 bbm_2  & & bbm_1 \\
\downarrow & & \downarrow\\
t^p t_1^{k_0}t_2^{k_1}\sigma_2\sigma_1^{\pm 1}\sigma_2^{-1} & \widehat{\simeq} & \underset{i}{\sum} A_i\cdot t^p t_1^{l_0}t_2^{l_1}\sigma_1^{\pm 1}\\
\end{array}
\]

On the one hand, from Lemma~\ref{lem1} we have:

\[
\begin{array}{llll}
t^{k_0}t_1^{k_1} & \widehat{\simeq} & q^{k_1-k_0}\cdot t^{k_1}t_1^{k_0} \ + \ q^{k_1-1} (q-1)\cdot z\cdot (k_1-k_0)\cdot t^{k_0+k_1}& +\\
&&&\\
& + & (q-1)^2 \ \underset{j=0}{\overset{k_1-2}{\sum}}\ \underset{\phi=0}{\overset{k_1-2-j}{\sum}}{q^{j+\phi}}t^{k_0+j+1+\phi}t_1^{k_1-1-j-\phi}& -\\
& -& (q-1)^2 \ \underset{j=0}{\overset{k_0-2}{\sum}}\ \underset{\phi=0}{\overset{j}{\sum}}{q^{k_1-j-2+\phi}}t^{k_0+k_1-j-1+\phi}t_1^{j+1-\phi}.& \\
\end{array}
\]

On the other hand, applying Eq.~\ref{l2} and Lemma~\ref{old2} on $t^p t_1^{k_0}t_2^{k_1}\sigma_2\sigma_1^{\pm 1}\sigma_2^{-1}$, we have that:

\noindent $ t^p t_1^{k_0}\underline{t_2^{k_1}\sigma_2}\sigma_1^{\pm 1}\sigma_2^{-1} \ \overset{(\ref{l2})}{\widehat{\simeq}}$
\[
\begin{array}{cl}
\widehat{\simeq} &  (q-1) \cdot  \underset{j=0}{\overset{k_1-1}{\sum}}q^j\cdot t^pt_1^{k_0+j}\underline{t_2^{k_1-j}\sigma_1^{\pm 1}}\sigma_2^{-1}\ +\ q^{k_1}\cdot t^p\underline{t_1^{k_0}\sigma_2} t_1^{k_1} \sigma_1^{\pm 1}\sigma_2^{-1}\ \overset{Lemma~\ref{old2}}{\widehat{\simeq}}\\
&\\
\widehat{\simeq} & (q-1) \cdot  \underset{j=0}{\overset{k_1-1}{\sum}}q^j\cdot t^pt_1^{k_0+j+1}\sigma_1^{\pm 1}\underline{t_2^{k_1-j-1}\sigma_2}\ +\ q^{k_1-k_0+1}\cdot \underline{t^p \sigma_2^{-1}}t_1^{k_1} t_2^{k_0} \sigma_1^{\pm 1}\sigma_2^{-1}\ +\\ 
&\\
 + & q^{k_1}\cdot \underset{j=0}{\overset{k_0-2}{\sum}}q^{-j} (q^{-1}-1)\cdot t^pt_1^{k_0+k_1-j-1}\underline{t_2^{j+1} \cdot \sigma_1^{\pm 1}\sigma_2^{-1}}\ \overset{(\ref{l2})}{\widehat{\simeq}}\\
&\\
\widehat{\simeq} & (q-1)^2\cdot \underset{j=0}{\overset{k_1-1}{\sum}} \underset{\phi=0}{\overset{k_1-j-2}{\sum}}q^{j+\phi} t^p t_1^{k_0+j+1+\phi} t_2^{k_1-j-1-\phi}\sigma_1^{\pm 1}\ + \\
&\\
+ & q^{k_1-1}(q-1)\cdot z\cdot k_1 \cdot t^p t_1^{k_0+k_1}\sigma_1^{\pm 1}\ -\ q^{k_1-k_0}(q-1) t^{p} t_1^{k_1+1} t_2^{k_0-1}\sigma_1^{\pm 1}\sigma_2^{-1}\ +\\
&\\
+ & q^{k_1-k_0} t^{p} t_1^{k_1} t_2^{k_0}\sigma_1^{\pm 1}\ +\ \underset{j=0}{\overset{k_0-2}{\sum}} q^{k_1-j}(q^{-1}-1) t^p t_1^{k_0-j+k_1}t_2^{j}\sigma_1^{\pm 1}\sigma_2\ \overset{(\ref{l2})}{\widehat{\simeq}}\\
&\\
=& (q-1)^2\cdot \underset{j=0}{\overset{k_1-1}{\sum}}\ \underset{\phi=0}{\overset{k_1-j-2}{\sum}}q^{j+\phi} t^p t_1^{k_0+j+1+\phi} t_2^{k_1-j-1-\phi}\sigma_1^{\pm 1}\ + \\
&\\
+ & q^{k_1-1}(q-1)\cdot z\cdot k_1 \cdot t^p t_1^{k_0+k_1}\sigma_1^{\pm 1}\ -\ \underset{C}{\underbrace{q^{k_1-k_0}(q-1)^2\underset{j=0}{\overset{k_0-2}{\sum}}q^j t^{p} t_1^{k_1+1+j} t_2^{k_0-1-j}\sigma_1^{\pm 1}}}\ +\\
&\\
+ & q^{k_1-1}(q-1)\cdot z\cdot t^pt_1^{k_0+k_1}\sigma_1^{\pm 1}\ -\ q^{k_1-1}(q-1)\cdot z\cdot (k_0-1)\cdot t^pt_1^{k_0+k_1}\sigma_1^{\pm 1}\ -\\
&\\
- &  \underset{D}{\underbrace{\underset{j=0}{\overset{k_0-2}{\sum}}\ \underset{\phi=0}{\overset{j-1}{\sum}}q^{k_1-j+\phi}(q^{-1}-1)(q-1)\cdot t^pt_1^{k_0+k_1-j+\phi}t_2^{j-\phi}\sigma_1^{\pm 1}}}\ =\\
&\\
= & (q-1)^2\cdot \underset{j=0}{\overset{k_1-1}{\sum}}\ \underset{\phi=0}{\overset{k_1-j-2}{\sum}}q^{j+\phi} t^p t_1^{k_0+j+1+\phi} t_2^{k_1-j-1-\phi}\sigma_1^{\pm 1}\ + \ q^{k_1-1}(q-1)\cdot z\cdot t^pt_1^{k_0+k_1}\sigma_1^{\pm 1}\ +\\
&\\
+ & q^{k_1-1}(q-1)\cdot z\cdot k_1 \cdot t^p t_1^{k_0+k_1}\sigma_1^{\pm 1}\ -\ q^{k_1-1}(q-1)\cdot z\cdot (k_0-1)\cdot t^pt_1^{k_0+k_1}\sigma_1^{\pm 1}\ -\\
&\\
- & \underset{C+D}{\underbrace{{(q-1)^2 \ \underset{j=0}{\overset{k_0-2}{\sum}}\ \underset{\phi=0}{\overset{j}{\sum}}{q^{k_1-j-2+\phi}}t^pt_1^{k_0+k_1-j-1+\phi}t_2^{j+1-\phi}\sigma_1^{\pm 1}}}}
\end{array}
\]

\noindent where in the last sumand we have set $j=k_0-r-2$. Thus, we have shown the following:

\[
\begin{array}{ccc}
t^{k_0}t_1^{k_1} & \overset{bbm_2}{\longrightarrow} & t^p t_1^{k_0}t_2^{k_1}\sigma_2\sigma_1^{\pm 1}\sigma_2^{-1} \\
&&\\
| & & | \\
conj.\ \& \ stab. & & conj.\ \& \ stab. \\ 
\downarrow & & \downarrow \\
&&\\
q^{k_1-k_0}\cdot t^{k_1}t_1^{k_0} & \overset{bbm_1}{\longrightarrow} & q^{k_1-k_0}\cdot t^pt_1^{k_1}t_2^{k_0}\sigma_1^{\pm 1}\\ 
+ & & +\\
q^{k_1-1} (q-1)\cdot z\cdot (k_1-k_0)\cdot t^{k_0+k_1} & \overset{bbm_1}{\longrightarrow} & q^{k_1-1} (q-1) \cdot z\cdot (k_1-k_0)\cdot t^pt_1^{k_0+k_1}\sigma_1^{\pm 1}\\
+ & & +\\
a \underset{j=0}{\overset{k_1-2}{\sum}}\ \underset{\phi=0}{\overset{k_1-2-j}{\sum}}{q^{j+\phi}}t^{k_0+j+1+\phi}t_1^{k_1-1-j-\phi} & \overset{bbm_1}{\longrightarrow} & a \underset{j=0}{\overset{k_1-2}{\sum}}\ \underset{\phi=0}{\overset{k_1-2-j}{\sum}}{q^{j+\phi}}t^pt_1^{k_0+j+1+\phi}t_2^{k_1-1-j-\phi}\sigma_1^{\pm 1}\\
- & & -\\
a \underset{j=0}{\overset{k_0-2}{\sum}}\ \underset{\phi=0}{\overset{j}{\sum}}{q^{k_1-j-2+\phi}}t^{k_0+k_1-j-1+\phi}t_1^{j+1-\phi} & \overset{bbm_1}{\longrightarrow} & a \underset{j=0}{\overset{k_0-2}{\sum}}\ \underset{\phi=0}{\overset{j}{\sum}}{q^{k_1-j-2+\phi}}t^pt_1^{k_0+k_1-j-1+\phi}t_2^{j+1-\phi}\sigma_1^{\pm 1} \\
\end{array}
\]

\noindent where $a=(q-1)^2$ and the proof is now concluded.
\end{proof}

Before proceeding with the proof of Theorem~\ref{mainre}, we shall first express elements $T\in \Lambda_{(k)} \subset \Lambda_{(k)}^{aug}$ as linear combinations of elements in $\Lambda^{aug}_{(k)}$, $T\ \widehat{\cong}\ \underset{i}{\sum}a_i\cdot T_i$, such that $T_i< T, \forall i$, and we will show that the following diagram commutes:

\[
\begin{array}{ccccc}
\Lambda_{(k)} &\ni & T & \overset{bbm_j}{\longrightarrow} & t^pT_{+}\sigma_j\ldots \sigma_2 \sigma_1^{\pm 1}\sigma^{-2}\ldots \sigma_j^{-1} \\
&& | && | \\
&& conj.\ \& \ stab. & & conj.\ \& \ stab. \\
&& \downarrow && \downarrow \\
\Lambda^{aug}_{(k)} &\ni & \underset{i}{\sum}a_i T_i & \overset{bbm_{j_i}}{\longrightarrow} & \underset{i}{\sum}a_i t^pT_{i_{+}} \sigma_{j_i}\ldots \sigma_2 \sigma_1^{\epsilon}\sigma^{-2}\ldots \sigma_{j_i}^{-1}\\
\end{array}
\]

\noindent where $j_i<j$ for all $i$. Namely, we prove the following result, which generalizes Proposition~\ref{indba}:

\begin{prop}\label{prop}
Let $T\in \Lambda_{(k)}$ for some $k\in \mathbb{Z}$. The equation $X_{\widehat{T}}\ = \ X_{\widehat{bbm_j(T)}}$ is a linear combination of the equations $X_{\widehat{T_i}}\ = \ X_{\widehat{bbm_{j_i}(T_i)}}$, where $T_i \in \Lambda^{aug}_{(k)}$, such that $T_i < T, \forall i$ and $j_i < j$, for all $i$.
\end{prop}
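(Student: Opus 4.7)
The plan is to proceed by induction on $j$, with base case $j=2$ for arbitrary $T\in\Lambda_{(k)}$. For $T$ of index $1$ (i.e.\ $T=t^{k_0}t_1^{k_1}$), Proposition~\ref{indba} already supplies the desired reduction. For $T$ of higher index, the additional looping generators $t_2^{k_2},\ldots,t_m^{k_m}$ commute with the outer conjugator $\sigma_2\sigma_1^{\pm 1}\sigma_2^{-1}$ of $bbm_2$ and play the role of passive spectators during the local manipulation near strands $1$ and $2$, so Proposition~\ref{indba} extends \emph{verbatim}. For $j>2$, the argument mimics this base case locally at strands $j-1$ and $j$.

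Given $T=t^{k_0}t_1^{k_1}\cdots t_m^{k_m}\in\Lambda_{(k)}$, write
\[
bbm_j(T)\ =\ t^p\,t_1^{k_0}\cdots t_{m+1}^{k_m}\,\sigma_j\sigma_{j-1}\cdots\sigma_2\sigma_1^{\pm 1}\sigma_2^{-1}\cdots\sigma_{j-1}^{-1}\sigma_j^{-1}.
\]
The first step is to apply Equation~\ref{l2} and Lemma~\ref{old2} repeatedly to push the rightmost factor $\sigma_j^{-1}$ leftward past the sub-word $t_{j-1}^{k_{j-2}}t_j^{k_{j-1}}$ of $T_+$, in exact analogy with the push of $\sigma_2$ past $t_2^{k_1}$ carried out in the proof of Proposition~\ref{indba}. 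Combined with a conjugation that brings the outer $\sigma_j$ adjacent to the freshly exposed $\sigma_j^{-1}$, and a stabilization move (third rule of ${\rm tr}$) that absorbs the resulting trivial pair, this transforms the expression into
\[
bbm_j(T)\ \widehat{\simeq}\ \sum_i a_i\, t^p\,T_{i,+}\,\sigma_{j-1}\cdots\sigma_2\sigma_1^{\pm 1}\sigma_2^{-1}\cdots\sigma_{j-1}^{-1}\ =\ \sum_i a_i\,bbm_{j-1}(T_i),
\]
where the $T_i\in\Lambda^{aug}_{(k)}$ are the monomials produced by the nested sums of Lemma~\ref{old2}, each strictly below $T$ in the ordering of Definition~\ref{order}.

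Carrying out the identical sequence of identities on $T$ itself, without the outer conjugator (which lives entirely on higher strand indices and does not obstruct the local moves), yields the parallel expansion $T\,\widehat{\simeq}\,\sum_i a_i T_i$ with matching coefficients and matching monomials. This is essentially Lemma~\ref{ordlem} specialized to the swap of the pair $t_{j-2}^{k_{j-2}}t_{j-1}^{k_{j-1}}$. Passing to the Markov trace and invoking Theorem~\ref{tr} then converts the two expansions into the identity
\[
X_{\widehat{T}}\ -\ X_{\widehat{bbm_j(T)}}\ =\ \sum_i a_i\Bigl(X_{\widehat{T_i}}\ -\ X_{\widehat{bbm_{j-1}(T_i)}}\Bigr),
\]
exhibiting the bbm equation for $(T,j)$ as a linear combination of bbm equations for $(T_i,j_i)$ with $T_i<T$ and $j_i=j-1<j$, which is exactly the asserted reduction.

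The main obstacle is the combinatorial bookkeeping. One must case-split on the signs of $k_{j-2}$ and $k_{j-1}$, exactly as in Lemma~\ref{lem1} and Lemma~\ref{ordlem}, and verify that every monomial produced by the iterated applications of Lemma~\ref{old2} satisfies $T_i<T$ in the ordering of Definition~\ref{order}. A subtler difficulty is that some intermediate terms carry residual braiding tails in ${\rm H}_n(q)$; before identifying them with elements of $\Lambda^{aug}_{(k)}$, one must invoke the ``braiding-tail elimination'' and ``ordering of exponents'' procedures of \cite{DLP} summarized in Theorem~\ref{dlpl} to rewrite them cleanly, and check that these further manipulations do not spoil the strict-order bound $T_i<T$ required by the induction hypothesis.
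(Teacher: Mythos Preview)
Your approach differs from the paper's and contains a genuine gap. The paper does not attempt to generalize the explicit local computation of Proposition~\ref{indba} to arbitrary $j$; instead it first decomposes $T$ at the \emph{top} of the monomial via Lemma~\ref{ordlem} (manipulating $t_{m-1}^{k_{m-1}}t_m^{k_m}$ and stabilizing the resulting $g_m$), and then invokes \cite[Proposition~5 \& Theorem~8]{DLP} to argue that the same sequence of conjugation and stabilization moves, carried out on $bbm_j(T)$, yields $\sum_i a_i\,bbm_{j_i}(T_i)$; the drop $j_i<j$ is extracted from \cite[Theorem~10]{DL2}, which controls how the braiding tail $g_j\cdots g_1^{\epsilon}\cdots g_j^{-1}$ shortens under those moves. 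The position $j$ of the bbm plays no role in how $T$ itself is decomposed.

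The gap in your argument is the claim that the higher looping generators $t_3^{k_2},\ldots,t_{m+1}^{k_m}$ (after the shift) are passive spectators, so that ``Proposition~\ref{indba} extends verbatim''. It is true that each such $t_i$ commutes with $g_1,g_2$ and can be pushed aside. But the computation in Proposition~\ref{indba} relies essentially on rule~(3) of the trace, ${\rm tr}(ag_n)=z\,{\rm tr}(a)$ for $a\in{\rm H}_{1,n}(q)$: at several points a trailing $g_2$ is absorbed this way, producing the factors of $z$ you see in the final expression. Once $t_3^{k_2}$ (or anything higher) is present, the ambient algebra is ${\rm H}_{1,n}(q)$ with $n\geq 4$, and a trailing $g_2$ is no longer of the form $ag_n$ with $a\in{\rm H}_{1,n}$; rule~(3) does not apply and the term cannot be reduced to $\Lambda^{aug}$. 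The same obstruction blocks your ``parallel expansion'' of $T$ at the internal pair $t_{j-2}^{k_{j-2}}t_{j-1}^{k_{j-1}}$: Lemma~\ref{ordlem} and Lemma~\ref{lem1} only succeed because they manipulate the top pair and stabilize $g_m$, not an interior $g_{j-1}$. So for ${\rm ind}(T)>1$ (equivalently, whenever the bbm is not on the top moving strand) neither side of your commuting square is actually established, and the asserted matching of coefficients $a_i$ is unjustified. Your final paragraph acknowledges residual braiding tails, but deferring to the tail-elimination machinery of \cite{DLP} at that point is essentially the paper's route, not a repair of the local-spectator argument.
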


\begin{proof}

Let $T\in \Lambda_{(k)}$. Then, since $\Lambda_{(k)} \subset \Lambda_{(k)}^{aug}$, using conjugation and stabilization moves we have that 
$T\ \widehat{\cong}\ \underset{i}{\sum}a_i\cdot T_i$, such that $T_i \in \Lambda^{aug}_{(k)}$ and $T_i< T, \forall i$.

\smallbreak

\noindent Let $T\ =\ t^{k_0}\ldots t_m^{k_m}\in \Lambda_{(k)}$, such that $\underset{i=0}{\overset{m}{\sum}}k_i=k$. Applying a bbm on the $j^{th}$-moving strand of $T$ we obtain $bbm_j(T)=t^pt_1^{k_0}\ldots t_{m+1}^{k_m}\sigma_j\ldots \sigma_1^{\epsilon}\ldots \sigma_j^{-1}$, such that the sum of the exponents of the $\sigma_i$'s in the $bbm_j(T)$ is equal to $\underset{i=1}{\overset{m}{\sum}}2(i+1)\cdot k_i$. Similarly, if $T_i\ =\ t^{l_0}\ldots t_n^{l_n} \in \Lambda_{(k)}^{aug}$, then $\underset{i=0}{\overset{n}{\sum}}l_i=k$ and $bbm_{j_i}(T_i)=t^pt_1^{l_0}\ldots t_{n+1}^{l_n}\sigma_{j_i}\ldots \sigma_1^{\epsilon}\ldots \sigma_{j_i}^{-1}$, such that the sum of the exponents of the $\sigma_i$'s in the $bbm_{j_i}(T)$ is equal to $\underset{i=1}{\overset{n}{\sum}}2(i+1)\cdot l_i$. Thus, we have that:

\[
\begin{array}{cccccccr}
X_{\widehat{T}} & = & X_{\widehat{t^pT_{+}\sigma_m\ldots \sigma_1^{\epsilon}\ldots \sigma_m^{-1}}} & \Rightarrow &  {\rm tr}(T) & = & \Delta \sqrt{\lambda}^{2k}\cdot {\rm tr}(t^pT_{+}g_m\ldots g_1^{\epsilon}\ldots g_m^{-1}) & {\rm and }\\
X_{\widehat{T_i}} & = & X_{\widehat{t^pT_{i_{+}} \sigma_{j_i}\ldots \sigma_1^{\epsilon}\ldots \sigma_{j_i}^{-1}}} & \Rightarrow &  {\rm tr}(T_i) & = & \Delta \sqrt{\lambda}^{2k}\cdot {\rm tr}(t^pT_{i_+}g_{j_i}\ldots g_1^{\epsilon}\ldots g_{j_i}^{-1}).&\\
\end{array}
\]

\bigbreak

\noindent Since $T\ \widehat{\cong}\ \underset{i}{\sum}a_i\cdot T_i$, we have that ${\rm tr}(T)\ =\ \underset{i}{\sum}a_i\cdot {\rm tr}(T_i)$ and, thus, it suffices to prove that: 

$${\rm tr}(t^pT_{+}g_m\ldots g_1^{\epsilon}\ldots g_m^{-1})\ =\ \underset{i}{\sum}a_i\cdot {\rm tr}(t^pT_{i_+}g_{j_i}\ldots g_1^{\epsilon}\ldots g_{j_i}^{-1}).$$

\noindent This follows from Proposition~5 \& Theorem~8 \cite{DLP}, where it is shown that stabilization moves and braid band moves commute. The fact that conjugation and braid band moves do not commute, results in the need of performing bbm's on different moving strands. The fact that $j_i<j$ for all $i$ comes from the fact that stabilization moves and conjugation reduce the `tail' $w= g_m\ldots g_1^{\epsilon}\ldots g_m^{-1}$ in the Hecke algebra of type A, ${\rm H}_n(q)$, to monomials of the form $g_j\ldots g_1^{\epsilon}\ldots g_j^{-1}$, where $j<m$ \cite[Theorem~10]{DL2}. The proof is now concluded.
\end{proof}

\begin{remark}\label{rm2}\rm
The statements of Propositions~\ref{indba} and \ref{prop} are also valid for monomials in $\Lambda^{aug}$ and not just in the subset $\Lambda$. 
Namely, if $\tau \in \Lambda_{(k)}^{aug}$, then $\tau \ \widehat{\cong}\ \underset{i}{\sum} b_i \tau_i$, where $\tau_i \in \Lambda_{(k)}^{aug}$such that $\tau_i < \tau$ and $b_i \in \mathbb{C}[q^{\pm 1}, z^{\pm 1}]$ for all $i$. One can easily confirm this by observing that nowhere in the proofs the order in the exponents played any role.
\end{remark}

We are now in the position to complete the proof of Theorem~\ref{mainre}.

\subsection{Proof of the main result}



We prove Theorem~\ref{mainre} by strong induction on the order of elements in $\Lambda^{aug} \supset \Lambda$ with respect to the total ordering and also on the moving strand where the bbm is performed. The basis of the induction concerns the monomials of type $t^{k_0}t_1^{k_1}\in \Lambda$, which are of minimal order among all non-trivial monomials in $\Lambda^{aug}$. The performance of a bbm on the second moving strand of such a monomial reduces, by Proposition~\ref{indba}, to taking place on the first moving strand of related words of lower order.

Consider now a monomial $T \in \Lambda_{(k)}$. We assume that the statement of Theorem~\ref{mainre} is true for all elements $\tau \in \Lambda^{aug}_{(k)}$ of lower order than $T\in \Lambda_{(k)}$ and we will show that it is true for $T$. By Lemma~\ref{ordlem}, the monomial $T$ can be expressed as a sum of elements in $\Lambda^{aug}_{(k)}$ of lower order. Namely, $T\ \widehat{\cong}\ \underset{i}{\sum} a_i T_i$, where $T_i \in \Lambda_{(k)}^{aug}$ such that $T_i < T$ and  $a_i \in \mathbb{C}[q^{\pm 1}, z^{\pm 1}]$ for all $i$. By Remark~\ref{rm2}, Lemma~\ref{ordlem} is also valid for elements in $\Lambda^{aug}_{(k)}$. We distinguish the following cases:

\bigbreak

\begin{itemize}
\item[Case I:] A bbm is performed on the second moving strand of $T$. We shall prove that the theorem is valid for $T$, assuming it is valid for words of lower order in $\Lambda^{aug}$ with the bbm performed on their second moving strand. Namely, we prove that the following diagram commutes:

\[
\begin{array}{cccc}
\Lambda_{(k)} \ni & T & \overset{bbm_2}{\longrightarrow} & t^pT_{+}\sigma_2 \sigma_1^{\pm 1}\sigma_2^{-1} \\
& | && | \\
& conj.\ \& \ stab. & & conj.\ \& \ stab. \\
& \downarrow && \downarrow \\
\Lambda^{aug}_{(k)} \ni & \underset{i}{\sum}a_i T_i & \overset{bbm_{1}}{\longrightarrow} & \underset{i}{\sum}a_i t^pT_+ \sigma_1^{\pm 1}\\
\end{array}
\]
assuming that the diagram below commutes for all $\tau < T$:
\[
\begin{array}{cccc}
\Lambda^{aug}_{(k)} \ni & \tau & \overset{bbm_2}{\longrightarrow} & t^p\tau_{+}\sigma_2 \sigma_1^{\pm 1}\sigma_2^{-1} \\
& | && | \\
& conj.\ \& \ stab. & & conj.\ \& \ stab. \\
& \downarrow && \downarrow \\\Lambda^{aug}_{(k)} \ni & \underset{i}{\sum}b_i \tau_i & \overset{bbm_{1}}{\longrightarrow} & \underset{i}{\sum}b_i t^p\tau_{i_+} \sigma_1^{\pm 1}\\
\end{array}
\]
This follows directly from Proposition~\ref{prop} and the induction hypothesis. Indeed, by Proposition~\ref{prop} a $bbm_2$ on $T$ reduces on $bbm_1$'s and $bbm_2$'s on the $T_i$'s. For the $bbm_2$'s we apply the indiction hypothesis writing each $T_i$ as a sum: $T_i = \underset{j}\sum \tau_j$, where $\Lambda_{(k)}^{aug} \ni \tau_j < T_i$ for all $j$.

\bigbreak

\item[Case II:] A bbm is performed on any moving strand $m>2$ of $T$. We shall prove that the theorem is valid for $T$ assuming it is valid for monomials of lower order in $\Lambda_{(k)}^{aug}$ with the bbm performed on the $j^{th}$ moving strand, where $j<m$. Namely, we prove that the following diagram commutes:

\[
\begin{array}{cccc}
\Lambda_{(k)} \ni & T & \overset{bbm_m}{\longrightarrow} & t^pT_{+}\sigma_m \ldots \sigma_1^{\pm 1} \ldots \sigma_m^{-1} \\
& | && | \\
& conj.\ \& \ stab. & & conj.\ \& \ stab. \\
& \downarrow && \downarrow \\\Lambda^{aug}_{(k)} \ni & \underset{i}{\sum}a_i T_i & \overset{bbm_{1}}{\longrightarrow} & \underset{i}{\sum}a_i t^pT_{i_+} \sigma_1^{\pm 1}\\
\end{array}
\]
assuming that the diagram below commutes for all $\tau < T$ and for all $j<m$:
\[
\begin{array}{ccccr}
\Lambda^{aug}_{(k)} \ni & \tau & \overset{bbm_j}{\longrightarrow} & t^p\tau_{+}\sigma_j\ldots \sigma_1^{\pm 1} \ldots \sigma_j^{-1}& \\
& | && |&\\
& conj.\ \& \ stab. & & conj.\ \& \ stab. & \quad (*)\\
& \downarrow && \downarrow \\\Lambda^{aug}_{(k)} \ni & \underset{i}{\sum}b_i \tau_i & \overset{bbm_{1}}{\longrightarrow} & \underset{i}{\sum}b_i t^p\tau_{i_+} \sigma_1^{\pm 1}&\\
\end{array}
\]

\bigbreak

\noindent From Proposition~\ref{prop} we have that equations of the form $T\ \overset{bbm_m}{\longrightarrow} \ t^pT_{+}\sigma_m \ldots \sigma_1^{\pm 1} \ldots \sigma_m^{-1}$ can be expressed as sums of equations of the form $T_i\ \overset{bbm_j}{\longrightarrow} \ t^pT_{i_+}\sigma_j \ldots \sigma_1^{\pm 1} \ldots \sigma_j^{-1}$, where $j<m$ and $T_i < T$ for all $i$. Namely, we have the following diagram:

\[
\begin{array}{ccccc}
\Lambda_{(k)} \ni & T & \overset{bbm_m}{\longrightarrow} & t^p T_{+}\sigma_m \ldots \sigma_1^{\pm 1} \ldots \sigma_m^{-1} & (\diamondsuit) \\
& | && | &\\
& conj.\ \& \ stab. & & conj.\ \& \ stab.& \\
& \downarrow && \downarrow &\\
\Lambda^{aug}_{(k)} \ni & \underset{i}{\sum}a_i \; T_i & \overset{bbm_{1}}{\longrightarrow} & \underset{i_1}{\sum}a_{i_1} t^p T_{i_{1_+}} \sigma_1^{\pm 1} & (1) \\
& & \overset{bbm_{2}}{\searrow}& + & \\
& &  & \underset{i_2}{\sum}a_{i_2} \; t^p T_{i_{2_+}} \sigma_2\sigma_1^{\pm 1}\sigma_2^{-1} & (2) \\
& & \vdots & \vdots & \vdots \\
& & \overset{bbm_{m-1}}{\searrow} & + & \\
& &   & \underset{i_{m-1}}{\sum}a_{i_{m-1}}\; t^p T_{{i_{m-1}}_+} \sigma_{m-1} \ldots \sigma_1^{\pm 1} \ldots \sigma_{m-1}^{-1} & (m-1)\\
\end{array}
\]

\noindent Hence, Eq.~$(\diamondsuit)$ can be expressed as a combination of the Equations ($2$)---($m-1$). From the induction hypothesis now and Remark~\ref{rm2}, we have that Equations ($2$)---($m-1$) can be written as sums of equations obtained from elements in $\Lambda_{(k)}^{aug}$ of lower order, by performing bbm's on their first moving strand as assumed in the diagram $(*)$. This concludes the proof.
\end{itemize}

\hfill QED

\section{Conclusions}

In this paper we present an infinite system of equations, a solution of which corresponds to computing the HOMFLYPT skein module of the lens spaces $L(p, 1)$. The equations of this system are obtained by performing braid band moves on the first moving strand of elements in the set $\Lambda^{aug}$, which augments the basic set of the HOMFLYPT skein module of the solid torus, $\Lambda$. In \cite{DLP} a different infinite system of equations is presented, where bbm's are performed only on elements in $\Lambda$ but on any moving strand. Although $\Lambda \subset \Lambda^{aug}$, the main advantage of considering $\Lambda^{aug}$ is that we obtain more control on the infinite system of equations. In \cite{DL4} we elaborate on the solution of the infinite system.

\end{document}